\newtheorem{thm}{Theorem}
\newtheorem{prop}{Proposition}
\newtheorem{cor}{Corollary}
\theoremstyle{remark}
\newtheorem{rem}{Remark}
\newcommand{\Nd}{\mathcal{N}}
\newcommand{\bb}{\pmb{b}}
\newcommand{\nb}{\pmb{n}}
\newcommand{\eb}{\pmb{e}}
\newcommand{\HN}{\mathcal{H}_N}
\begin{document}
\date{}

\author{N.\ V.\ Alexeev}
\address{Chebyshev Laboratory\\ St. Petersburg State University\\
14th Line V.O. 29B\\
St. Petersburg 199178 Russia}
\email{nikita.v.alexeev{\char'100}gmail.com}

\author{J.\ E.\ Andersen}
\address{Centre for Quantum Geometry of Moduli Spaces\\Department of 
Mathematics\\
University of Aarhus\\
DK-8000 Aarhus C, Denmark}
\email{andersen{\char'100}qgm.au.dk}

\author{R.\ C.\ Penner}
\address{Centre for Quantum Geometry of Moduli Spaces\\Department of 
Mathematics\\
University of Aarhus\\
DK-8000 Aarhus C, Denmark\\
and Mathematics Department, Caltech\\
Pasadena, CA 91125 USA}
\email{rpenner{\char'100}caltech.edu}

\author{P.\ G.\ Zograf}
\address{St.Petersburg Department \\
Steklov Mathematical Institute\\
Fontanka 27\\ St. Petersburg 191023, and
Chebyshev Laboratory\\ St. Petersburg State University\\
14th Line V.O. 29B\\
St.Petersburg 199178 Russia}
\email{zograf{\char'100}pdmi.ras.ru}

\title  [Enumerating chord diagrams]{Enumeration of chord diagrams on many 
intervals and their non-orientable analogs}

\thanks{Research supported by the Centre for Quantum Geometry of Moduli Spaces 
which is funded by the Danish National Research Foundation.
The work of NA and PZ was further supported by the Government of the Russian 
Federation megagrant 11.G34.31.0026, by JSC ``Gazprom Neft", and by the RFBR 
grants 13-01-00935-a and 13-01-12422-OFI-M2. In addition, NA was partially 
supported by the RFBR grant 14-01-00500-a and by the SPbSU grant 6.38.672.2013, 
and PZ  was partially supported by the RFBR grant 14-01-00373-a}

\begin{abstract}
Two types of connected chord diagrams with
chord endpoints lying in a collection of
ordered and oriented real segments are considered here:
the real segments may contain additional 
bivalent vertices in one model but not in the other.   
In the former case, we record in a generating function the number of fatgraph
boundary cycles containing a fixed number of bivalent vertices
while in the latter, we instead record the number of boundary
cycles of each fixed length. 
Second order, non-linear, algebraic partial differential
equations are derived which are satisfied by these generating functions in each 
case 
giving efficient enumerative schemes. Moreover, these generating functions 
provide multi-parameter families of solutions 
to the KP hierarchy. For each model, there is furthermore a non-orientable 
analog, and each such model likewise has its own associated differential 
equation. The enumerative problems we solve are interpreted in terms of certain 
polygon gluings. 
As specific applications, we
discuss models of several interacting RNA molecules.
We also study a matrix integral which  computes numbers of 
chord diagrams in both orientable and
non-orientable cases in the model with bivalent vertices, and
the large-N limit is computed using techniques of free probability.
\end{abstract}

\maketitle

\section{Introduction} 
A {\em partial chord diagram} is a connected fatgraph (i.e., a graph equipped 
with a cyclic order on the half edges incident to each vertex)
comprised of an ordered set of $b\geq 1$ disjoint real line segments (called 
{\em backbones}) connected with $k\geq 0$ {\em chords} in the upper half plane 
with distinct endpoints, so that there are $2k$ vertices of degree three (or 
{\em chord endpoints}) and $l\geq 0$ vertices of degree two (or {\em marked 
points}) all belonging to the backbones (in effect, ignoring the vertices of 
degree one arising from backbone endpoints.) If $l=0$ so there are no marked 
points, then we call the diagram a {\em (complete) chord diagram}. Each partial 
or complete chord diagram is a spine of an orientable surface with $n\geq 1$ 
boundary components and therefore has a well-defined topological genus. The 
genus $g$ of a partial chord diagram on $b$ backbones and its number $n$ of 
boundary components are related by Euler's formula $b-k+n=2-2g$.

Chord diagrams occur pervasively in mathematics, which further highlights the 
importance of the counting results obtained here. To mention a few, see the 
theory of finite type invariants of knots and links \cite{K}
(cf. also \cite{BN1}), the representation theory of Lie algebras \cite{CM}, the 
geometry of moduli spaces of flat connections on surfaces \cite{AMR1,AMR2} and 
mapping class groups \cite{ABMP}.  Moreover and as we shall further explain 
later,  partial and complete chord diagrams each provide a useful model  
\cite{PW,P4,OZ,VOZ} for the combinatorics of interacting RNA molecules with the 
associated genus filtration of utility in enumerative problems  
\cite{ACPRS,Bon,OZ,POZ,PTOZ,VOZ,VROZ} and in folding algorithms on one 
\cite{gfold,TT2NE} and two backbones \cite{AHPR}.

Our goal is to enumerate various classes of connected partial and complete chord 
diagrams, and to this end, we next introduce
combinatorial parameters, where each enumerative problem turns out to be solved 
by an elegant partial differential equation 
on a suitable generating function in dual variables.
In effect, creation and annihilation operators for the combinatorial data are 
given by multiplication and differentiation operators in the dual variables 
leading to algebraic differential equations.

We say that a partial chord diagram has 
\begin{itemize}
\item \textit{backbone  spectrum} $\bb=(b_1,b_2,\ldots)$ if the diagram has 
$b_i$ backbones with precisely $i\geq 1$ vertices (of  degree either two or 
three);

\item \textit{boundary point spectrum} $\nb=(n_0,n_1,\ldots)$ if its boundary 
contains $n_i$ connected components with $i$ marked points;

\item \textit{boundary length spectrum} ${\boldsymbol{p}}=({p}_1,{p}_2,\ldots)$ 
if the boundary cycles of the diagram
consist of ${p}_i$ edge-paths of length $i\geq 1$, where the {\em length} of a 
boundary cycle is the number of chords
it traverses counted with multiplicity  (as usual on the graph obtained from the 
diagram by collapsing each backbone to a distinct point) {\sl plus} the number 
of backbone undersides it traverses (or in other words, the number of traversed 
backbone intervals obtained by removing all the chord endpoints from all the 
backbones).
\end{itemize}

The data $\{g,k,l;\bb;\nb;\boldsymbol{p}\}$ is called the {\em type} of a 
partial chord diagram (cf. Fig.~\ref{cd_example}). Note that the entries in the 
data set $\{g,k,l;\bb;\nb;\boldsymbol{p}\}$ are not independent. In particular, 
we have 
\begin{align*}
&b=\sum_{i\geq 1} b_i, \quad n=\sum_{i\geq 0} n_i=\sum_{i\geq 1} {p}_i,\quad 
l=\sum_{i>0}in_i,\\
&2k+l=\sum_{i>0} ib_i,\quad 2k+b=\sum_{i\geq 1}i{p}_i.
\end{align*}
Let $\Nd_{g,k,l}({\bb},{\nb},\boldsymbol{p})$ denote the number of distinct 
connected partial chord diagrams of type $\{g,k,l;{\bb};{\nb};\boldsymbol{p}\}$ 
taken to be zero if there are no chord diagrams of the specified type.  Our two 
basic models involve boundary point spectra of partial chord diagrams and 
boundary length spectra of complete chord diagrams, and each basic model, in 
turn, has both an orientable and a non-orientable incarnation.

\begin{figure}[hbt]%
 \begin{center}
 \includegraphics[width=9cm]{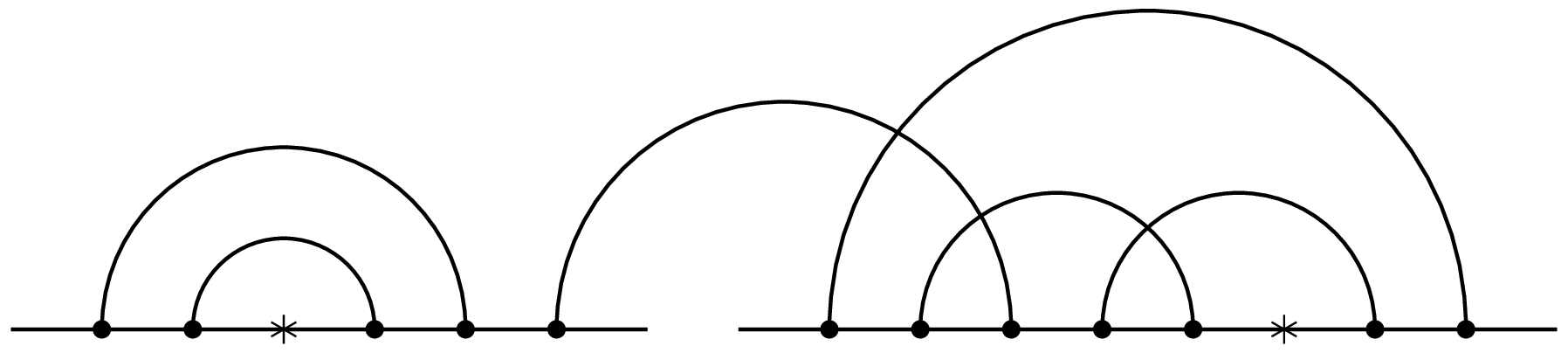}
 \caption{The partial chord diagram of type 
$\{1,6,2;\pmb{e}_6+\pmb{e}_8;2\pmb{e}_0+2\pmb{e}_1; 
\pmb{e}_1+2\pmb{e}_2+\pmb{e}_9\}$. 
Here $\pmb{e}_i$ stands for the sequence with 1 in the $i$-th place and $0$ 
elsewhere.}
 \label{cd_example}%
\end{center}
\end{figure}

We may also consider non-orientable chord diagrams. Let 
$\widetilde\Nd_{h,k,l}(\bb,\nb,\boldsymbol{p})$
denote the number of both orientable and non-orientable connected diagrams on 
$b$ backbones, out of which exactly $b_i$ have $i$ vertices, with $k$ pairs of 
vertices connected by (twisted or untwisted) chords, with boundary point and 
boundary length spectra $\nb$ and $\boldsymbol{p}$ respectively, and with
Euler characteristic $2-h-n,\; n=\sum_{i=1}^\infty n_i$, where $h$ denotes twice 
the genus in the orientable case and the number of cross caps in the 
non-orientable case. This can evidently be formalized in the language of planar 
projections of chord diagrams by two-coloring the chords depending upon whether 
they preserve or reverse the orientation of the plane of projection.

For partial chord diagrams and boundary point spectra, we shall count the 
subsets
$$\Nd_{g,k,l}(\bb,\nb)=\sum_{\boldsymbol{p}} 
~\Nd_{g,k,l}(\bb,\nb,\boldsymbol{p})$$
in the orientable case and
$$\widetilde\Nd_{h,k,l}(\bb,\nb)=\sum_{\boldsymbol{p}}~ 
\widetilde\Nd_{h,k,l}(\bb,\nb,\boldsymbol{p})$$ 
in the non-orientable case. 

We can equivalently replace each backbone component containing $b_i$ vertices by 
a polygon with $b_i$ sides (one of which is distinguished, corresponding to the 
first along the backbone). Thus, the numbers $\Nd_{g,k,l}(\bb,\nb)$ count the 
orientable genus $g=1+(k-b-n)/2$ connected gluings of $b$ polygons, among which 
exactly $b_i$ have $i$ sides, with $k$ pairs of sides identified in such a way 
that the boundary of the glued surface has exactly $n_i$ connected components 
consisting of $i$ sides.\footnote{For $b=1$, a similar problem was addressed in 
\cite{AS}, but the formulas derived there are considerably different from the 
ones obtained in this paper and do not stand a numerical check.}
In particular, the Harer-Zagier numbers $\epsilon_{g,k}$ \cite{HZ} that 
enumerate closed orientable genus $g$ gluings of a $2k$-gon coincide with the 
numbers $\Nd_{g,k,l}(\nb,\bb)$ with $\nb=(k-2g+1,0,0,\dots)$ and 
$\bb=\pmb{e}_{2k}$,
where we denote by $\pmb{e}_i$ the vector with 1 in the $i$-th place and $0$ 
elsewhere.

A useful notation for exponentiating a $m$-tuple $\pmb{a}=(a_1,a_2,\ldots, a_m)$ 
of variables by an integral $m$-tuple 
$\boldsymbol\alpha=(\alpha_1,\alpha_2,\ldots ,\alpha_m)$ is to write simply 
${\pmb{a}}^{\boldsymbol\alpha}=a_1^{\alpha _1}a_2^{\alpha _2}\ldots a_m^{\alpha 
_m}$;
we extend this notation in case $\pmb{a}=(a_1, a_2, a_3,\ldots )$ is a fixed 
infinite sequence of variables and $\boldsymbol\alpha$ is a finite tuple.  In 
this notation and setting $\pmb{s}=(s_0,s_1, \ldots)$ and $\pmb{t}=(t_1,t_2, 
\ldots)$, we define the orientable, multi-backbone, boundary point spectrum 
generating function $F(x,y;\pmb{s};\pmb{t})=\sum_{b\geq 1} 
F_b(x,y;\pmb{s};\pmb{t})$, where
\begin{align}
F_b(x,y;\pmb{s};\pmb{t})=
\frac{1}{b!} \sum_{k=b-1}^\infty \:\sum_{\nb}\;\sum_{\sum b_i=b} 
\Nd_{g,k,l}(\bb,\nb) x^{2g-2} y^k {\pmb{ s}}^{\nb} {\pmb t}^{\bb}\;,
\end{align}
and the non-orientable generating functions
$\tilde F(x,y;\pmb{s};\pmb{t})=\sum_{b\geq 1} \tilde F_b(x,y;\pmb{s};\pmb{t})$, 
where
\begin{align}\label{gf}
\tilde F_b(x,y;\pmb{s};\pmb{t})=
\frac{1}{b!} \sum_{k=b-1}^\infty \: \sum_{\nb}\;\sum_{\sum 
b_i=b}\widetilde{\Nd}_{g,k,l}(\bb,\nb) x^{h-2} y^k {\pmb{ s}}^{\nb} {\pmb 
t}^{\bb}\;
\end{align}
(we recall that $2-2g=b-k+\sum_{i\geq 0}n_i$ in the orientable case and 
$2-h=b-k+\sum_{i\geq 0}n_i$ in the non-orientable case, while in the both cases 
$l=\sum_{i\geq 1}in_i$).

\bigskip
\begin{thm} [Boundary point spectrum for partial chord diagrams] \label{thm1}
\noindent Consider the linear differential operators
\begin{align}
L_{0}&=\frac{1}{2} \sum_{i=0}^\infty \sum_{j=0}^i (i+2)s_j 
s_{i-j}\frac{\partial}{\partial s_{i+2}}\,,\nonumber\\
L_1&=\frac{1}{2} \sum_{i=0}^\infty(i+2)(i+1)s_i \frac{\partial}{\partial 
s_{i+2}}\,,\nonumber\\
L_2&=\frac{1}{2}\sum_{i=2}^\infty s_{i-2} \sum_{j=1}^{i-1} 
j(i-j)\,\frac{\partial^2}{\partial s_{j}\partial s_{i-j}}\,\nonumber
\end{align}
and the quadratic differential operator
\begin{align}
QF=\frac{1}{2}\sum_{i=2}^\infty s_{i-2} \sum_{j=1}^{i-1} j(i-j)\,\frac{\partial 
F}{\partial s_{j}}\cdot\frac{\partial F}{\partial s_{i-j}}\;.\nonumber
\end{align}
Then the following partial differential equations hold:
\begin{align}
&\frac{\partial F_1}{\partial y} = (L_0+x^2L_2)F_1~,\qquad\quad\;
\frac{\partial \tilde F_1}{\partial y} = (L_0+xL_1+2x^2L_2)\tilde 
F_1\,,\nonumber\\
&\frac{\partial F}{\partial y} = (L_0+x^2L_2+x^2Q)F~,\qquad
\frac{\partial \tilde F}{\partial y} = (L_0+xL_1+2x^2L_2+2x^2Q)\tilde 
F\,.\nonumber
\end{align}
These equations, together with the common for each case initial condition at 
$y=0$ given by $x^{-2}\sum_{i\geq 1} s_it_i$, determine the generating functions 
$F_1, F, \tilde F_1, \tilde F$ uniquely.
\end{thm}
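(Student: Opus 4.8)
The plan is to read $\partial/\partial y$ as the operation of adjoining one chord. Since none of the operators $L_0,L_1,L_2,Q$ involves $y$, writing $F=\sum_k F^{(k)}y^k$ and matching powers turns each stated equation into a recursion $(k+1)F^{(k+1)}=(\cdots)F^{(k)}$, in which the right-hand operator, applied to the $k$-chord data, produces $(k{+}1)$-chord data; together with the $y=0$ datum this yields both existence and uniqueness once the recursion is established, so the content of the theorem is the identification of the right-hand operator with ``add one chord.'' I would first record the bookkeeping that makes this grading work: fixing the backbone spectrum $\bb$ (tracked by $\pmb t$, which the operators leave untouched since adjoining a chord changes no backbone's total vertex count) and using $2k+l=\sum_i i\,b_i$, adjoining a chord turns two degree-two vertices (marked points) into degree-three chord endpoints, so $k\mapsto k+1$ and $l\mapsto l-2$. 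Hence every operator must lower the total marked-point count $\sum_i i\,n_i$ by exactly two, which is visibly the case for each of $s_{i+2}\mapsto s_js_{i-j}$, $s_{i+2}\mapsto s_i$, and $\partial^2/\partial s_j\partial s_{i-j}$ followed by $s_{i-2}$.

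The geometric heart is the classification of what happens to the boundary cycles when the new chord is attached as a band to the spine surface; each such attachment lowers $\chi$ by one. Using $2g-2=k-b-n$ in the orientable case and $h-2=k-b-n$ in the non-orientable case (both read off from the stated Euler relations), the power of $x$ contributed by a move is governed uniformly by how $n$ changes, and I would organize the analysis by how the band's two feet meet the existing boundary. If they lie on a single boundary cycle carrying $i+2$ marked points, then either the untwisted attachment splits that cycle into two cycles with $j$ and $i-j$ marked points, keeping the surface orientable, raising $n$ by one, leaving $k-b-n$ fixed, and reproducing $L_0$ (coefficient $x^0$), or the twisted attachment keeps one boundary cycle, creates a cross-cap, leaves $n$ fixed, raises $k-b-n$ by one, and reproduces $L_1$ (coefficient $x^1$, non-orientable only). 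If instead the feet lie on two distinct boundary cycles of sizes $j$ and $i-j$, the band merges them into one cycle with $i-2$ marked points, lowering $n$ by one and raising $k-b-n$ by two; when both cycles belong to one already-connected diagram this is the second-derivative operator $x^2L_2$, whereas when they belong to two different connected diagrams the merge fuses the components and is the quadratic term $x^2Q$, the product $\partial F/\partial s_j\cdot\partial F/\partial s_{i-j}$ recording the independent choice of one boundary cycle from each factor. This dichotomy explains the shape of every equation: a single-backbone diagram is automatically connected and can never be split off a second component, so $Q$ is absent from the $F_1$ and $\tilde F_1$ equations, while for $F$ and $\tilde F$ both the internal ($L_2$) and the fusing ($Q$) merges occur.

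In the non-orientable counts $\widetilde\Nd$ one records both the untwisted and the twisted band for each merging move, and since both have the same triple $(k,b,n)$ they produce the same $h$; this is the origin of the coefficient $2$ in $2x^2L_2$ and $2x^2Q$, whereas the splitting move $L_0$ arises only from the untwisted band and so keeps coefficient $1$, the twisted single-cycle band instead supplying the extra $xL_1$. What remains is the purely combinatorial verification of the multiplicities: for a boundary cycle carrying $m$ marked points I would count the chords splitting it in a prescribed way, yielding the weight $(i+2)$ in $L_0$ and $(i+2)(i+1)$ in $L_1$, and the chords merging two cycles of sizes $j$ and $i-j$ at prescribed marked points, yielding the weight $j(i-j)$ in $L_2$ and $Q$, with the factors $\tfrac12$ absorbing the unordered choice of the two resulting (respectively merged) cycles. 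I expect this tallying of marked-point positions along a boundary cycle, carried out simultaneously with the interleaving of backbones in the fusion term (where the $1/b!$ normalizations of $F_{b_1}$ and $F_{b_2}$ must combine with a binomial $\binom{b}{b_1}$ to reproduce the $1/b!$ of $F_b$), to be the main obstacle, since it is exactly where the orientation conventions, the precise meaning of ``twisted'' for a planar chord, and the incidence of marked points with the split boundary arcs all have to be pinned down consistently. Once these multiplicities match the operators the four equations follow, and uniqueness is immediate from the $y$-graded recursion with initial term $F|_{y=0}=F_1|_{y=0}=x^{-2}\sum_{i\ge1}s_it_i$, the only connected chordless diagrams being single backbones carrying $i$ marked points on one genus-zero boundary cycle.
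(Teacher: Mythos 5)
Your proposal is correct and follows essentially the same route as the paper: the paper also proves the theorem by double-counting diagrams with one marked chord (equivalently, your $y$-graded ``add one chord'' recursion), classifying the addition by whether the chord's endpoints lie on one boundary cycle (untwisted split $\to L_0$, twisted cross-cap $\to xL_1$), on two cycles of one component ($\to x^2L_2$), or on two connected components ($\to x^2Q$), with the factor $2$ in the non-orientable merging terms arising exactly as you say. The multiplicity tallying you defer as ``the main obstacle'' is precisely what the paper's Propositions \ref{rec_forv2} and \ref{qrec} carry out, including the $\delta$-corrections for coincident boundary-cycle sizes and the $b!/(b^{(1)}!\,b^{(2)}!)$ factor in the fusion term.
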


Equivalently, each differential equation is solved by exponentiating $k$ times 
the operator on the right hand side applied to
$x^{-2}\sum_{i\geq 1} s_it_i$, for example, 
\begin{align*}
&F_1(x,y;s_0,s_1,\ldots 
;t_1,t_2,\ldots)=e^{y(L_0+x^2L_2)}\Big(x^{-2}\mbox{\LARGE $\Sigma$}_{i\geq 1} 
s_i t_i\Big)\;,\\
&e^{F(x,y;s_0,s_1,\ldots 
;t_1,t_2,\ldots)}=e^{y(L_0+x^2L_2)}e^{x^{-2}\mbox{\large $\Sigma$}_{i\geq 1} s_i 
t_i}\;.
\end{align*}
This explains the relationship between these differential equations and the 
corresponding enumerative problems.
These are the most efficient enumerations  of which we are aware.  As we shall 
see in the proof,
each term corresponds to adding a certain type of chord: $L_0$ and $L_2$, 
respectively, for chords with both endpoints on the same and different boundary 
components lying in a common component, $Q$ for chords whose removal separates 
the diagram, and $L_1$ the analogue of $L_0$ for M\"obius bands that give rise 
to M\"obius graphs as compared to fatgraphs in the oriented case (the subscripts 
0,1 and 2 by $L$ reflect the change in the Euler characteristic of the chord 
diagram under such an operation).  

In the last section of this paper, we provide matrix model formulas for certain 
linear combinations of the numbers 
$\Nd_{g,k,l}(\bb,\nb) $ and $\widetilde{\Nd}_{h,k,l}(\bb,\nb)$. This allows us 
to compare our computations for partial chord 
diagrams with results on a certain limiting spectral distribution, the so-called 
large $N$-limit for one backbone.
Note that a recursion for the numbers $\widetilde{\Nd}_{h,k,0}({\pmb 
e}_{2k},{\pmb n})$, for ${\pmb n}=(k-h+1,0,\dots),$ of all complete (not 
necessarily orientable) gluings
of a $2k$-gon was derived in \cite{Led} using the methods of random matrix 
theory. Our formulas specialize to those of \cite{Led}
in this particular case.

For complete chord diagrams and boundary length spectra, we shall count the 
subsets
$$\Nd_{g,k,b}(\boldsymbol{p})=\sum_{\sum 
b_i=b}\sum_{\nb}~\Nd_{g,k,0}(\bb,\nb,\boldsymbol{p})$$
in the orientable case and
$$\widetilde{\Nd}_{g,k,b}(\boldsymbol{p})=\sum _{\sum 
b_i=b}\sum_{\nb}\widetilde{\Nd}_{h,k,0}(\bb,\nb,\boldsymbol{p})$$
in the non-orientable case.
We define the orientable, multi-backbone, length spectrum generating function 
$G(x,y,t;\pmb{s})=\sum_{b\geq 1} G_{b}(x,y;\pmb{s}) t^b$, where
\begin{align}
G_{b}(x,y;\pmb{s})=
\frac{1}{b!}\sum_{k=0}^\infty \: \sum_{\boldsymbol{p}}\: 
\Nd_{g,k,b}(\boldsymbol{p}) x^{2g-2} y^k {\pmb{s}}^{\boldsymbol{p}} ,
\end{align}
and the non-orientable generating function $\tilde G(x,y,t;\pmb{s})=\sum_{b\geq 
1} \;  \tilde G_{b}(x,y;\pmb{s}) t^b$, where
\begin{align}
\tilde G_b(x,y;\pmb{s})=
\frac{1}{b!}\sum_{k\geq0} \; 
\sum_{\boldsymbol{p}}\widetilde{\Nd}_{h,k,b}(\boldsymbol{p}) x^{h-2} y^k 
{\pmb{s}}^{\boldsymbol{p}}\,.
\end{align}

\begin{thm}[Boundary length spectrum for complete chord diagrams] \label{thm2}
Define the linear differential operators
\begin{align}
K_{0}&=\frac{1}{2} \sum_{i=3}^\infty \sum_{j=1}^{i-1}
~(i-2)s_{j}s_{i-j}\frac{\partial}{\partial s_{i-2}}
~,\nonumber\\
K_1&=\frac{1}{2} \sum_{i=3}^\infty~(i-2)(i-1)\,s_{i} \frac{\partial}{\partial 
s_{i-2}}\,,\nonumber\\
K_2&=\frac{1}{2}\sum_{i=2}^\infty \sum_{j=1}^{i-1}
~j(i-j)~s_{i+2} \frac{\partial^2}{\partial s_{j}\partial s_{i-j}}\,
\end{align}
and the quadratic differential operator
\begin{align}
RG=\frac{1}{2}\sum_{i=2}^\infty s_{i+2} \sum_{j=1}^{i-1} ~j(i-j)\,\frac{\partial 
G}{\partial s_{j}}\cdot\frac{\partial G}{\partial s_{i-j}}\;.
\end{align}
Then the following partial differential equations hold:
\begin{align}
&\frac{\partial G_1}{\partial y}= (K_0+x^2K_2)G_1~,\qquad\quad
\frac{\partial \tilde G_1}{\partial y} = (K_0+xK_1+2x^2K_2)\tilde 
G_1\,,\nonumber\\
&\frac{\partial G}{\partial y}= (K_0+x^2K_2+x^2R)G~,\qquad
\frac{\partial \tilde G}{\partial y} = (K_0+xK_1+2x^2K_2+2x^2R)\tilde 
G\,.\nonumber
\end{align}
These equations, together with the common in each case initial condition at 
$y=0$ given by $x^{-2}ts_1$, determine the generating functions $G_1, G, \tilde 
G_1, \tilde G$ uniquely.
\end{thm}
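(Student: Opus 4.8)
\emph{Proof proposal.} The plan is to mirror the proof of Theorem~\ref{thm1}, replacing the bookkeeping of boundary marked points by that of boundary cycle lengths; the only structural change is the direction of the index shift in the operators (lengths \emph{increase} by $2$ when a chord is added, whereas the marked-point count decreases), which is precisely what distinguishes $K_0,K_1,K_2$ from $L_0,L_1,L_2$. First I would read $\partial/\partial y$ as distinguishing one of the $k$ chords: since $G_b=\frac1{b!}\sum_k\sum_{\boldsymbol p}\Nd_{g,k,b}(\boldsymbol p)\,x^{2g-2}y^k\pmb s^{\boldsymbol p}$, differentiation in $y$ produces the factor $k$, i.e.\ a sum over complete chord diagrams carrying a marked chord. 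Deleting the marked chord yields a complete chord diagram with $k-1$ chords, so the content of the theorem is that summing over all ways of re-attaching one chord reproduces the right-hand operators.

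The key step is the case analysis for re-attaching a chord, together with the attendant length bookkeeping. Working first with the full (possibly disconnected) generating function $Z=e^{G}$, I would establish the \emph{linear} equation $\partial_y Z=(K_0+x^2K_2)Z$ in the orientable case. A new chord has two endpoints placed in boundary cycles, and a boundary cycle of length $\ell$ offers $\ell$ admissible attachment sites. If both endpoints lie on one cycle of length $i-2$, that cycle splits into two cycles, each acquiring one new chord-side, so two arcs of lengths summing to $i-2$ become cycles of lengths $j$ and $i-j$ with $j+(i-j)=i$; this yields $K_0$ with its position factor $(i-2)$ and the symmetry factor $\tfrac12$. If the endpoints lie on two distinct cycles of lengths $j$ and $i-j$, these merge into a single cycle which traverses both sides of the new chord and hence has length $j+(i-j)+2=i+2$; this yields $K_2$ with position factor $j(i-j)$, and since the merge sends $n\mapsto n-1$, $k\mapsto k+1$ it raises the genus by one and carries the weight $x^2$. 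Passing to $G=\log Z$ then produces the stated equation for $G$ automatically: because $K_0$ is first order one has $K_0e^{G}=(K_0G)e^{G}$, while $K_2e^{G}=(K_2G+RG)e^{G}$ since $\frac{\partial^2}{\partial s_j\partial s_{i-j}}e^{G}=\bigl(\frac{\partial^2 G}{\partial s_j\partial s_{i-j}}+\frac{\partial G}{\partial s_j}\cdot\frac{\partial G}{\partial s_{i-j}}\bigr)e^{G}$. The cross term $RG$ is exactly the contribution of a chord whose two endpoints lie in \emph{different} connected components (a separating chord joining two backbone clusters), which is why $R$ is absent from the single-backbone equation for $G_1$, whose diagrams remain connected under chord addition.

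For the non-orientable refinement I would allow each chord to be inserted twisted or untwisted. A chord with endpoints on two distinct cycles can be inserted in two ways with the same effect on the length spectrum and on the Euler characteristic, which doubles $K_2$ and $R$ to $2x^2K_2$ and $2x^2R$. A chord with both endpoints on one cycle instead splits into two subcases: the untwisted insertion splits the cycle and leaves the Euler characteristic unchanged (the operator $K_0$), whereas the twisted insertion creates a cross-cap without splitting and changes the Euler characteristic by one (so $n$ is unchanged, $k\mapsto k+1$, $h\mapsto h+1$), giving the new operator $K_1$ weighted by $x$; this is the origin of the term $xK_1$ in the equations for $\tilde G_1$ and $\tilde G$. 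The initial condition is immediate: at $y=0$ there are no chords, so a single backbone has genus $0$ and one boundary cycle of length $1$, contributing $x^{-2}ts_1$, and uniqueness follows since the equations determine every $y$-Taylor coefficient recursively from this seed.

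I expect the main obstacle to be the rigorous verification of the position counts and length-index shifts, in particular justifying that a boundary cycle of length $\ell$ contributes exactly $\ell$ attachment sites, that the new chord-sides account for the uniform total length increase of $+2$ in both the split and merge cases, and that the lengths distribute as $j$ and $i-j$ with the combinatorial weights $(i-2)$ and $j(i-j)$ and the symmetry factor $\tfrac12$. Getting these weights exactly right, together with a careful treatment of the twisted insertions so that no non-orientable configuration is double-counted or omitted, is the technical heart of the argument; the passage between $Z$ and $G$ and the uniqueness claim are then purely formal.
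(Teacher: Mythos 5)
Your proposal is correct and follows essentially the same route as the paper: the paper proves Theorem~\ref{thm2} via a chord-marking/chord-erasing double count (Proposition~\ref{prop_dual}) with exactly your three cases (split one boundary cycle, merge two cycles with a genus jump, separate into two connected components), and handles the non-orientable terms by the same twisted/untwisted insertion analysis borrowed from Section~\ref{sec:nonor}. The only cosmetic difference is that you obtain the quadratic term $R$ by working with $Z=e^{G}$ and taking logarithms, whereas the paper writes the separating-chord contribution as an explicit convolution sum over ordered splittings of the diagram; these are equivalent, and the paper itself records the linear equation for the partition function.
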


\begin{rem} Complete gluing of a $2k$-gon with a marked edge can be enumerated 
in a similar way. Consider the image of the polygon perimeter, that is, the 
graph embedded in the glued surface. We say that the embedded graph has the 
\textit{vertex  spectrum} $\pmb{v}
=(v_1,v_2,\dots)$ if there are exactly $v_i$ vertices  of degree $i$.  Let 
$\widehat{N}_{g,k}(\pmb{v})$ denote the number of genus $g$ orientable gluings 
of a $2k$-gon, such that the embedded graph has the vertex spectrum $\pmb{v}$. 
The generating function
\begin{align}\label{that}
\widehat{F}(x,y;s_1,s_2,\dots) = \sum_{k=1}^\infty 
\sum_{g=0}^{[k/2]}\sum_{\pmb{v}} 
\widehat{N}_{g,k}(\pmb{v})\,x^{2g-2} y^{k-1} \pmb{s}^{\pmb{v}}
\end{align}
for the numbers $\widehat{N}_{g,k}(\pmb{v})$ satisfies the equation
\begin{align}\label{fhatpde}
\frac{\partial\widehat{F}(x,y;\pmb{s})}{\partial y}=(K_0+x^2 
K_2)\widehat{F}(x,y;\pmb{s}),
\end{align}
and is uniquely determined by it together with the initial condition
$$\widehat{F}\left|_{y=0}\right.=x^{-2}s_1^2.$$
Actually, $\widehat{F}$ and $G_1$ are explicitly related by the formula
\begin{align}
\frac12\Lambda_1\widehat{F}=
\frac{\partial G_1}{\partial y}\,,\qquad \Lambda_1=\sum_{i=1}^\infty 
is_{i+1}\frac{\partial}{\partial s_i}\;
\end{align}
(this immediately follows from the fact that both $K_0$ and $K_2$ commute with $\Lambda_1$).
The same problem, but differently formulated (namely, the enumeration of genus 
$g$ fatgraphs on $n$ vertices of specified degrees) was recently solved in 
\cite{MS}. However, our generating function (\ref{that}) for these numbers and 
the partial differential equation  (\ref{fhatpde}) it satisfies are different 
from their counterparts in \cite{MS}.
\end{rem}

The following observation we learned from M.~Kazarian \cite{Ka1,Ka2}: for $x=1$ 
the generating functions $F\left|_{x=1}\right.$ and $G\left|_{x=1}\right.$
satisfy an infinite system of non-linear partial differential equations called 
the KP (Kadomtsev-Petviashvili) hierarchy (in particular, this means that the 
numbers $\Nd_{g,k,l}(\bb,\nb)$ and ${\Nd}_{g,k,b}(\boldsymbol{p})$ additionally 
obey an infinite system of recursions). The KP hierarchy is one of the best 
studied completely integrable systems in mathematical physics. Below are the 
several first equations of the hierarchy:
\begin{align}\label{KP}
&F_{22}=-\frac12\,F_{11}^2+F_{31}-\frac1{12}\,F_{1111}\;,\nonumber\\
&F_{32}=-F_{11}F_{21}+F_{41}-\frac16F_{2111}\;,\nonumber\\
&F_{42}=-\frac12\,F_{21}^2-F_{11}F_{31}+F_{51}+\frac18\,F_{111}^2
+\frac1{12}\,F_{11}F_{1111}-\frac14\,F_{3111}+\frac1{120}\,F_{111111}\;,
\nonumber\\
&F_{33}=\frac13\,F_{11}^3-F_{21}^2-F_{11}F_{31}+F_{51}
+\frac14\,F_{111}^2+\frac13\,F_{11}F_{1111}-\frac13\,F_{3111}+\frac1{45}\,F_{
111111}\;,
\end{align}
where the subscript $i$ stands for the partial derivative with respect to $s_i$. 
The exponential $e^F$ of any solution is called a {\em tau function} of the 
hierarchy. The space of solutions (or the space of tau functions) has a nice 
geometric interpretation as an infinite-dimensional Grassmannian (called the 
{\em Sato Grassmannian}), see, e.~g., \cite{MJD} or \cite{Ka1} for details.
See also \cite{AU1} for another application of
the Sato Grassmannian to conformal field theory.
The 
space of solutions is homogeneous: there is a Lie algebra 
$\widehat{\mathfrak{gl}(\infty)}$ (a central extension of 
$\mathfrak{gl}(\infty)$) that acts infinitesimally on the space of solutions, 
and the action of the corresponding Lie group is transitive.

Introduce the standard bosonic creation-annihilation operators
\begin{align*}
a_i=\begin{cases} s_i &{\rm if}\; i>0\\ 0 & {\rm if}\; i=0\\ 
(-i)\frac{\partial}{\partial s_{-i}} & {\rm if}\; i<0 \end{cases}
\end{align*} 
and put
\begin{align*}
&\varLambda_m=\frac{1}{2}\sum_{i=-\infty}^{\infty}a_i\,a_{m-i}\;,\\
&M_m=\frac{1}{6}\sum_{i,j=-\infty}^{\infty}:\!a_i\,a_j\,a_{m-i-j}\!:
\end{align*}
(the notation $:\!a_{i_1}\dots a_{i_r}\!\!:$ stands for the ordered product 
$a_{i_{\sigma(1)}}\dots a_{i_{\sigma(r)}}$, where $\sigma\in S_r$ is a 
permutation such that $i_{\sigma(1)}\geq\dots\geq i_{\sigma(r)}$).\footnote{The 
operator $M_0$ is the famous cut-and-join operator \cite{GJ} used in the 
computation of Hurwitz numbers.}
All the operators $a_i,\; \varLambda_m, \; M_m$ belong to the Lie algebra 
$\widehat{\mathfrak{gl}(\infty)}$. Moreover, it is easy to check that 
\begin{align}
L_0+L_2=s_0^2\frac{\partial}{\partial s_2} + s_0\varLambda_{-2} + 
M_{-2}\,,\qquad K_0+K_2=M_2\,,
\end{align}
so that $L_0+L_2$ and $K_0+K_2$ also belong to 
$\widehat{\mathfrak{gl}(\infty)}$. Now we notice that the exponentials 
$e^{\sum s_it_i}$ and $e^{ts_1}$ of the initial conditions in Theorems 1 and 2 
both are KP tau functions for a trivial reason -- their logarithms are linear in 
$s_1,s_2,\dots$ and therefore obviously satisfy the equations of KP hierarchy 
(\ref{KP}) for any values of the other parameters. Moreover, both $e^{L_0+L_2}$ and 
$e^{K_0+K_2}$ preserve the Sato Grassmannian and map KP tau functions to KP tau 
functions. Thus, $e^{F\left|_{x=1}\right.}=e^{L_0+L_2}e^{\sum_{i\geq 1} s_it_i}$ 
and $e^{G\left|_{x=1}\right.}=e^{K_0+K_2}e^{ts_1}$ are KP tau function as well, 
and we get

\begin{cor}[M.~Kazarian \cite{Ka2}]\label{tau}
The generating functions 
$$F\left|_{x=1}\right.=F(1,y;s_0,s_1,\dots;t_1,t_2,\dots)\;{\it and}\; 
G\left|_{x=1}\right.=G(1,y,t;s_1,s_2,\dots)$$
satisfy the infinite system of KP equations (\ref{KP}) with respect to 
$s_1,s_2,\dots$ for any values of the parameters $y,t,t_1,t_2,\dots$. 
Equivalently, the partition functions $e^{F\left|_{x=1}\right.}$ and 
$e^{G\left|_{x=1}\right.}$ are (multi-parameter) families of KP tau functions.
\end{cor}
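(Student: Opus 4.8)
The plan is to obtain the corollary by reading the two tau functions directly off the explicit exponential solutions recorded after Theorems~\ref{thm1} and~\ref{thm2}, and then invoking the group-theoretic structure of the KP hierarchy. First I would note that although $F$ and $G$ solve \emph{nonlinear} equations (through $Q$ and $R$), their exponentials solve \emph{linear} ones. Indeed, since $L_2$ and $K_2$ are second order, the identity
\[
\frac{\partial^2 e^F}{\partial s_j\,\partial s_{i-j}}=\left(\frac{\partial^2 F}{\partial s_j\,\partial s_{i-j}}+\frac{\partial F}{\partial s_j}\,\frac{\partial F}{\partial s_{i-j}}\right)e^F
\]
gives $x^2L_2e^F=(x^2L_2F+x^2QF)e^F$, so that $e^F$ obeys $\partial_y e^F=(L_0+x^2L_2)e^F$, and likewise $\partial_y e^G=(K_0+x^2K_2)e^G$; the quadratic terms $Q,R$ are exactly what is absorbed. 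Setting $x=1$ and integrating from the initial data of Theorems~\ref{thm1} and~\ref{thm2} yields $e^{F|_{x=1}}=e^{y(L_0+L_2)}e^{\sum_{i\geq1}s_it_i}$ and $e^{G|_{x=1}}=e^{y(K_0+K_2)}e^{ts_1}$. It therefore suffices to show that each right-hand side is a KP tau function.

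Two inputs then finish the argument. For the initial data: the logarithms $\sum_{i\geq1}s_it_i$ and $ts_1$ are linear in the KP times $s_1,s_2,\dots$, while every monomial occurring in the KP equations (\ref{KP}) is a product of partial derivatives of $F$ each of order at least two; all such terms annihilate a linear function, so the equations hold as $0=0$. Equivalently, $e^{\sum_i t_is_i}$ is the image of the vacuum tau function $\tau\equiv1$ under the Heisenberg flow $\exp(\sum_i t_ia_i)$, hence lies in the $\widehat{GL(\infty)}$-orbit of the vacuum. For the evolution operators: the identities $L_0+L_2=s_0^2\,\partial/\partial s_2+s_0\varLambda_{-2}+M_{-2}$ and $K_0+K_2=M_2$ stated above exhibit them, with $s_0$ and $y$ regarded as scalar parameters and $\partial/\partial s_2=\tfrac12 a_{-2}$, as elements of $\widehat{\mathfrak{gl}(\infty)}$, since $a_m,\varLambda_m,M_m$ all belong there. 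By the boson-fermion correspondence (see \cite{MJD,Ka1}) the one-parameter subgroups $e^{y(L_0+L_2)}$ and $e^{y(K_0+K_2)}$ of $\widehat{GL(\infty)}$ preserve the Sato Grassmannian and so carry tau functions to tau functions. Combining the two inputs shows that $e^{F|_{x=1}}$ and $e^{G|_{x=1}}$ are tau functions for every $y$; as the remaining parameters $s_0,t,t_1,t_2,\dots$ enter only through scalar coefficients and the linear-log initial data, we obtain the asserted multi-parameter families.

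The real content lies in the second input, and two points deserve care rather than ingenuity. First, one must be certain that the cubic bosonic operators $M_{\pm2}$ genuinely lie in $\widehat{\mathfrak{gl}(\infty)}$ and not in some larger completion on which the tau-preserving property can fail; this is reconciled through the boson-fermion dictionary, under which a fermion bilinear weighted quadratically along a fixed diagonal becomes a cubic operator in the $a_i$, so that $M_m$, despite being cubic, is the image of an honest (essentially diagonal) bilinear element, with normal ordering ensuring no central anomaly obstructs the group action. Second, one should confirm that the exponentials are well defined as formal operators: in the grading $\deg s_i=i$ the operator $L_0+L_2$ is homogeneous of degree $-2$, hence locally nilpotent so that $e^{y(L_0+L_2)}$ terminates on each graded piece, while $e^{y(K_0+K_2)}=e^{yM_2}$ raises degree but remains a well-defined formal power series in $y$ when applied to a fixed series. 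Finally, because $y$ is not one of the KP times, the tau property must be argued for each fixed $y$ as membership of the group element in $\widehat{GL(\infty)}$, not as a flow in $y$; the same applies to the spectator parameters. The hard part is thus not computation but the correct invocation of this integration-to-the-group mechanism.
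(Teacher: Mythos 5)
Your argument is essentially identical to the paper's: you express $e^{F|_{x=1}}$ and $e^{G|_{x=1}}$ as $e^{y(L_0+L_2)}e^{\sum s_it_i}$ and $e^{y(K_0+K_2)}e^{ts_1}$, observe that the initial data are tau functions because their logarithms are linear in the KP times, and use the same identities $L_0+L_2=s_0^2\partial/\partial s_2+s_0\varLambda_{-2}+M_{-2}$ and $K_0+K_2=M_2$ to place the evolution operators in $\widehat{\mathfrak{gl}(\infty)}$, whose exponentials preserve the Sato Grassmannian. The additional care you take with the boson--fermion dictionary and the formal well-definedness of the exponentials only fills in details the paper leaves implicit.
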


Let us now comment on the relevance of the above results to describing the RNA 
interactions. 
Define $C_{g,b,k}=\sum_{\boldsymbol{p}} {\mathcal N}_{g,k,b}(\boldsymbol{p})$ to 
be the number of complete and connected chord diagrams of genus $g$ on $b$ 
ordered and oriented backbones with $k$ chords, so in particular,
$C_{g,1,k}$ is the Harer-Zagier number $\epsilon_{g,k}$.
These chord diagrams provide the basic model for a complex of interacting RNA 
molecules, one RNA molecule for each backbone and one chord for
each Watson-Crick\footnote
{These are the allowed bonds G-C and A-U between nucleic acids.  For the expert, 
let us emphasize that any other model including wobble G-U or further exotic
base pairs is handled in exactly the same way with one chord for each allowed 
type of bond.
} bond between nucleic acids, where one demands that the chord endpoints respect 
the 
natural ordering\footnote{From the so-called 5' to 3' end as determined by the 
chemical structure of the RNA.} of the nucleic acids in each molecule, i.e, in 
each oriented backbone.  It is very natural, as is the attention to connected 
chord diagrams in order to avoid separate molecular interactions.   In reality, 
RNA folds according to a {\sl partial} chord diagram, i.e., there are in 
practice unbonded nucleic acids.\footnote {Typically, 50 to 80 percent of 
nucleic acids participate in Watson-Crick base pairs together with several 
percent exotic.  On the other hand in an extreme example, roughly 50 percent are 
Watson-Crick and 40 percent exotic for  ribosomal RNA.}

Recall from \cite{APRW} that a shape
is a special connected and complete chord diagram which has no parallel chords, 
has a unique ``rainbow" on each backbone, i.e., a chord whose endpoints are 
closer to the backbone endpoints than any other chord and
no ``1-chords'' connecting vertices consecutive in a single backbone unless the 
1-chord is a rainbow.
In the very special (genus zero on one backbone) case, the single-chord diagram 
is permitted since
the 1-chord is a rainbow, but in all other cases, there are no 1-chords, each 
backbone has a unique rainbow,
and ${p}_1=0$, ${p}_2=b$. If a shape is not the special single-chord diagram and 
we remove its $b$ rainbows, then
the resulting diagram has ${p}_1=0={p}_2$.  Conversely, in a chord diagram with 
${p}_1=0={p}_2$, no backbone has a rainbow, and $b$ rainbows can be added to 
produce a shape.
Let $S_{g,b,k}$ denote the number of shapes of genus $g$ on $b$ backbones with 
$k$ chords.

Define the generating functions $C(x,y,t) =  \sum_{g=0}^\infty  
\sum_{b=1}^\infty  C_{g,b}(y)x^{2g-2} t^b$, with
$$C_{g,b}(y) = \frac{1}{b!}\sum_{k=2g+b-1}^\infty C_{g,b,k}  \, y^k ,$$
and $S(x,y,t) = \sum_{g=0}^\infty \sum_{b=1}^\infty S_{g,b}(y) x^{2g-2}t^b $, 
with
$$
S_{g,b}(y) =\frac{1}{b!}\sum_{k=2g+2b-1}^{6g-6+5b} S_{g,b,k} y^k.
$$
It follows by construction that 
$$C(x,y,t) = F(x,y;1,0,0,\ldots; t, t, \ldots)=G(x,y,t;1,1,\cdots )$$ and 
$S(x,y,t) = 1 + G(x,y,t;0,0,1,1,\ldots)$, so we have computed
here both the complete chord diagrams $C_{g,b,k}$ and the shapes 
$S_{g,b,k}$\footnote{Furthermore, the free energy $F_g(s,t)$ for the matrix 
model in \cite{ACPRS}  is given (up to a constant depending only on $g$ times 
$N^{2-2g}$)
by our $G(N^{-1}, t^\frac{1}{2},s;1,1,1...)$.}.
In fact \cite{ACPRS},
the generating functions for shapes and chord diagrams are algebraically related 
by
$$\aligned
C_{g,b}(z)&=\left(\frac{1}{z C_0(z)}\right)^b~ 
S_{g,b}\left(\frac{C_0(z)-1}{2-C_0(z)}\right),\cr
S_{g,b}(z)&=\left(\frac{z}{1+2z}\right)^b~ 
C_{g,b}\left(\frac{z(1+z)}{(1+2z)^2}\right),\cr
\endaligned$$
where $C_0(z)=\frac{1-\sqrt{1-4z}}{2z}$ is the Catalan generating function, the 
former equation expressing the formal power series
$C_{g,b}(z)$ in terms of the polynomial $S_{g,b}(z)$.
As a further interesting open problem, inspired by the results of this paper, we ask
if there is a non-zero finite order differential operator in the variables $(x, y, t)$
which together with an initial condition determines $C(x, y, t)$?

One point about shapes is that standard combinatorial techniques allow their 
``inflation'' to 
complete chord diagrams as indicated in the previous formulas, and furthermore, 
complete chord diagrams can likewise be inflated to
partial chord diagrams, cf.\ \cite{APRW,Reidysbook}.  Another point is that 
shape inflation is well-suited to the accepted Ansatz for free energy
and so provides efficient polynomial-time algorithms for computing minimum free 
energy RNA folds \cite{gfold,Reidysbook}
at least in the planar case.
A further geometric point \cite{APRW} is that shapes of genus $g$ on $b$ 
backbones are dual to cells in the Harer-Mumford-Strebel \cite{Strebelbook} or 
Penner \cite{Penner87}
decomposition of Riemann's moduli space of genus $g$ surfaces with $b$ boundary 
components provided $2g-2+b>0$.

As was already discussed, it is really partial chord diagrams that actually 
describe complexes of RNA molecules with its
distillation first to complete chord diagrams and then to shapes.  All three 
formulations of the combinatorics have thus been
treated here, namely, shapes and complete chord diagrams by the previous 
formulas and partial chord diagrams by inflation
or instead directly with our generating function in Theorem \ref{thm1}.

This paper is organized as follows.  Section 2 contains basic combinatorial 
results on the boundary point spectra of chord diagrams on one backbone and 
derives the equation given before on $F_1$ (Proposition \ref{1b}), and section 3 
extends these results to include possibly separating edges and derives the 
equation given before on $F$ (Proposition \ref{mb}).  Boundary point spectra of 
non-orientable surfaces are discussed in Section 4, and the equations given 
before on $\tilde F_1$ and $\tilde F$ are derived 
(Proposition \ref{nonor_thm}), so together
Propositions \ref{1b}-\ref{nonor_thm} comprise Theorem 1.
Section 5 is dedicated to boundary length spectra, and the situation is similar 
to boundary point spectra in that each counts
data for each fatgraph boundary cycle.  For this reason, the arguments are only 
sketched for boundary length spectra
culminating in the equations from before on $G_1$, $G$, $\tilde G_1$ and $\tilde 
G$ (Theorem 2). Section 6 introduces
a random matrix technique for partial chord diagrams and provides a matrix 
integral for boundary point spectra computations in both the orientable and 
non-orientable cases. Free probability techniques permit the computation of the 
large-N limit which reproduces computations based on the partial differential 
equations, providing a consistency check on the entire discussion. 

\section{Combinatorics of connected partial chord diagrams}\label{secct}
As before, $\pmb{e}_i$ denotes the sequence $(0,\dots,0,1,0,\dots)$ with 1 in 
the $i$-th place and 0 elsewhere. 
We say simply that a diagram is of type $\{ g,k,l;\bb;\nb\}$ if it is of type 
$\{ g,k,l;\bb,\nb,\boldsymbol{p}\}$ 
for some $\boldsymbol{p}$ and let
$\Nd_{g,k,l}(\bb,\nb) = 0$ if there are no diagrams of type $\{ 
g,k,l;\bb;\nb\}$.

\begin{prop} \label{rec_forv2}
The numbers $\Nd_{g,k,l}(\pmb{e}_{2k+l},\nb)$ enumerating one backbone chord 
diagrams of type $\{g,k,l;\pmb{e}_{2k+l};\nb\}$ obey the following recursion 
relation:
\begin{align}
k\,\Nd_{g,k,l}(\pmb{e}_{2k+l},&\,\nb)=\nonumber\\
\frac12 &\sum_{i=0}^\infty \sum_{j=0}^{i} (i+2)(n_{i+2}+1)
\Nd_{g,k-1,l+2}(\pmb{e}_{2k+l}, 
\nb-\pmb{e}_j-\pmb{e}_{i-j}+\pmb{e}_{i+2})+\nonumber\\
\frac12 &\sum_{i=0}^\infty \sum_{j=1}^{i+1} j(i+2-j)
(n_j+1+\delta_{j,i+2-j}-\delta_{i,j})(n_{i+2-j}+1-\delta_{j,2})\times\nonumber\\
&\hspace{1in}\Nd_{g-1,k-1,l+2}(\pmb{e}_{2k+l}, 
\nb+\pmb{e}_j+\pmb{e}_{i+2-j}-\pmb{e}_{i}).
\label{big_sumv2}
\end{align}
\label{rec_for}
\end{prop}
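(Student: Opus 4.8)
The plan is to prove the recursion by induction on the number $k$ of chords, removing a distinguished chord from a one-backbone diagram of type $\{g,k,l;\pmb{e}_{2k+l};\nb\}$ and analyzing how the type changes. The factor $k$ on the left-hand side arises because I will count pairs (diagram, choice of one of its $k$ chords), so summing over all diagrams and all chords gives $k\,\Nd_{g,k,l}(\pmb{e}_{2k+l},\nb)$; equivalently, this bookkeeping corresponds to the creation/annihilation picture announced in the introduction, where a chord is adjoined to a diagram with $k-1$ chords.

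First I would set up the basic move: starting from a diagram $D'$ with $k-1$ chords, I add one new chord whose two endpoints are placed on the backbone, each endpoint splitting one marked point (bivalent vertex) into the chord endpoint it creates. Adding a chord increases $k$ by $1$ and, because two marked points are consumed, the number $l$ of marked points drops by $2$; hence the diagrams on the right carry indices $k-1$ and $l+2$, i.e.\ the ancestor $D'$ has $l+2$ marked points, two of which are selected to become the new chord's feet. The key is to track the boundary point spectrum $\nb$ and the Euler characteristic (genus) under this operation. There are exactly two topological cases. If the two new endpoints lie on the \emph{same} boundary cycle of $D'$, the new chord splits that cycle into two, the Euler characteristic changes so that $g$ is preserved, and the spectrum changes by removing one boundary component carrying $i+2$ marked points and creating two components carrying $j$ and $i-j$ marked points. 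If the two endpoints lie on \emph{different} boundary cycles, the chord merges them into one, $g$ increases by $1$, and the spectrum changes by removing two components with $j$ and $i+2-j$ marked points and adding one with $i$. These two cases produce, respectively, the first (genus-preserving) and second (genus-raising) sums in \eqref{big_sumv2}.

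The main obstacle, and the part I would treat most carefully, is the combinatorial weighting: verifying that the multiplicities are exactly right, which is where the factors $(n_{i+2}+1)$, $(i+2)$, $j(i+2-j)$, and the Kronecker-delta corrections appear. The factor $(n_{i+2}+1)$ counts the number of ways to select, in the ancestor $D'$, the boundary component (with $i+2$ marked points) on which both feet will be placed, given that $D'$ has spectrum $\nb-\pmb{e}_j-\pmb{e}_{i-j}+\pmb{e}_{i+2}$; the inner factor $(i+2)$ and the index $j$ record the relative cyclic position of the two feet along that component, which determines how its $i+2$ marked points are partitioned into the two descendant cycles of sizes $j$ and $i-j$. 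In the genus-raising case, the factor $j(i+2-j)$ counts the independent placements of one foot on each of two distinct components, and the corrections $(n_j+1+\delta_{j,i+2-j}-\delta_{i,j})$ and $(n_{i+2-j}+1-\delta_{j,2})$ adjust for the degenerate situations in which the two selected components have equal marked-point counts (avoiding double counting) or in which a component created or destroyed has $0$ or $2$ marked points (coinciding with endpoints or the backbone underside). I would enumerate these boundary cases one at a time, checking each delta against a small explicit diagram, and confirm that the symmetry factor $\tfrac12$ in front of each sum correctly compensates for the ordered-versus-unordered choice of the two feet.
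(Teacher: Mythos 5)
Your plan is essentially the paper's own argument: double-count pairs (diagram, distinguished chord), equivalently regard each $k$-chord diagram as obtained from a $(k-1)$-chord ancestor with $l+2$ marked points by joining two of its marked points with a chord, and split into the genus-preserving case (both feet on one boundary cycle of the ancestor, which the new chord splits in two) and the genus-raising case (feet on two distinct cycles, which the chord merges). Your accounting of the factors $(n_{i+2}+1)$, $(i+2)$, $j(i+2-j)$, and of the $\tfrac12$ compensating the $j\leftrightarrow i-j$ symmetry, matches the paper's proof.

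The one place where your stated reading would lead you astray is the pair of corrections $-\delta_{i,j}$ and $-\delta_{j,2}$: they have nothing to do with a component having ``$0$ or $2$ marked points'' or with backbone endpoints. In the genus-raising case the ancestor's spectrum is $\nb+\pmb{e}_j+\pmb{e}_{i+2-j}-\pmb{e}_i$, so the number of its boundary cycles carrying $j$ marked points is $n_j+1$ \emph{only if} $j\neq i$; when $j=i$ the term $-\pmb{e}_i$ cancels against $+\pmb{e}_j$ and the count drops to $n_j$, whence $-\delta_{i,j}$. Likewise $i+2-j=i$ exactly when $j=2$, giving $-\delta_{j,2}$ in the second factor. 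Only $+\delta_{j,i+2-j}$ is the ``two chosen cycles have equal counts'' correction you describe: an unordered pair drawn from the $n_j+2$ such cycles gives $(n_j+2)(n_j+1)/2$ choices with $j^2$ attachments, rather than the naive ordered product. Since you propose to check each delta against explicit small diagrams you would presumably discover this, but as written your justification of those two deltas is incorrect and should be replaced by the spectrum-collision argument above.
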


\begin{proof}
Let us start with a chord diagram of type $\{g,k,l;\pmb{e}_{2k+l};\nb\}$. Note 
that erasing a chord in a diagram, we keep its endpoints as marked points. This 
yields two possibilities. 

The first possibility is that the chord belongs to two distinct boundary 
components, say, one with $j$ and the other with $i-j$ marked points. After 
erasing the chord, these two boundary components join into one component with 
$i+2$ marked points, and the genus of the diagram does not change (see Fig. 
\ref{cd_b}). Thus, one gets a diagram  of genus $g$ with $k-1$ chords, $l+2$ 
marked points and boundary point spectrum 
$\nb-\pmb{e}_j-\pmb{e}_{i-j}+\pmb{e}_{i+2}$. 

\begin{figure}[hbt]%
 \begin{center}
 \includegraphics[width=7cm]{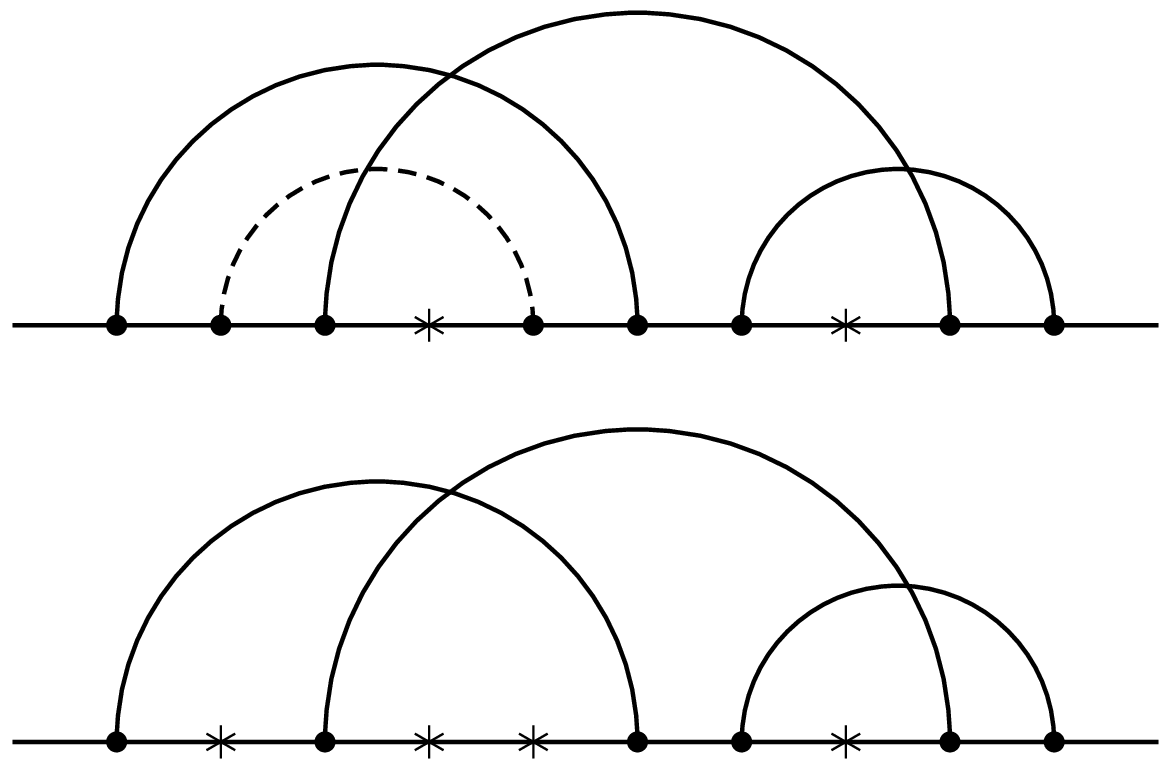}
 \caption{Erasing the dashed chord changes the diagram type  from 
$\{1,4,2;\pmb{e}_{10};2\pmb{e}_0+\pmb{e}_2\}$ to 
$\{1,3,4;\pmb{e}_{10};\pmb{e}_0+\pmb{e}_4\}$}
 \label{cd_b}%
\end{center}
\end{figure}

\begin{figure}[hbt]%
 \begin{center}
 \includegraphics[width=7cm]{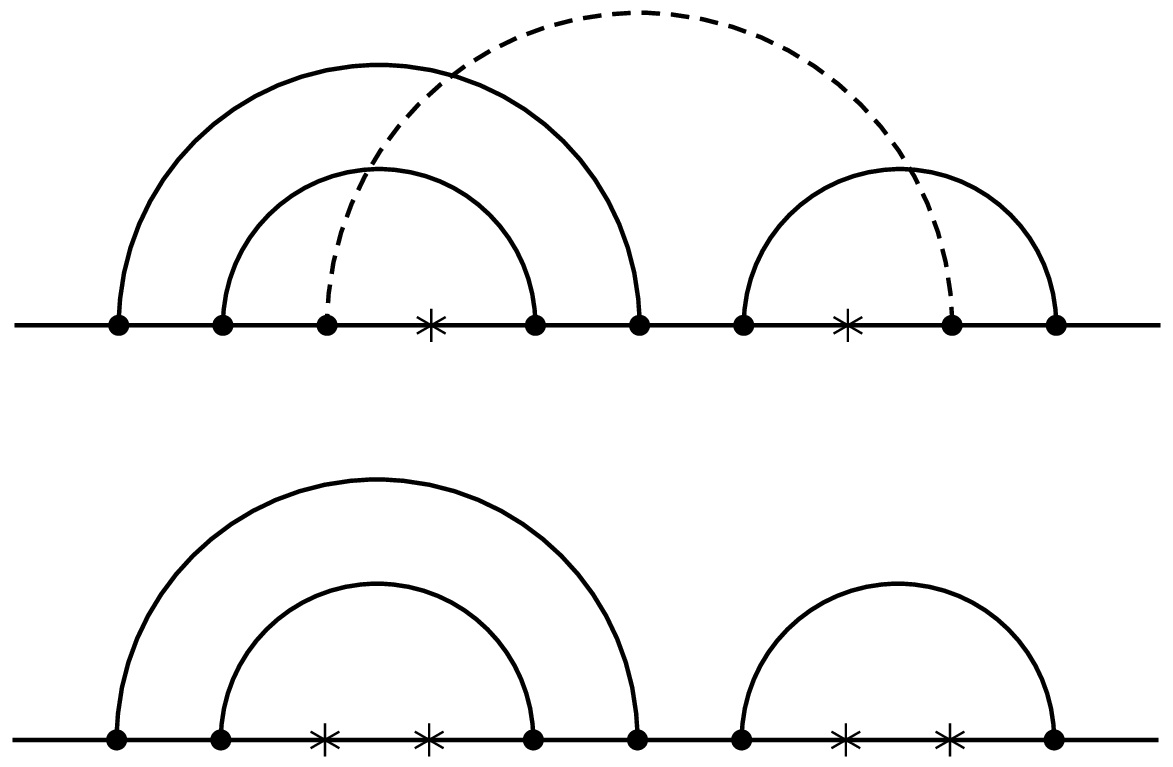}
 \caption{Erasing the dashed chord changes the diagram type  from 
$\{1,4,2;\pmb{e}_{10};2\pmb{e}_0+\pmb{e}_2\}$ to 
$\{0,3,4;\pmb{e}_{10};2\pmb{e}_0+2\pmb{e}_2\}$}
 \label{cd_a}%
\end{center}
\end{figure}

The second possibility is that one boundary component traverses the chord twice, 
i.e., once in each direction. Erasing this chord splits the boundary component 
(say, with $i$ marked points) into two (with $j$ and $i-j+2$ marked points 
respectively, $0\leq j\leq i+1$) (see Fig. \ref{cd_a}). In this case, one gets a 
chord diagram of genus $g-1$ with $k-1$ chords, $l+2$ marked points and boundary 
point spectrum $\nb+\pmb{e}_j+\pmb{e}_{i+2-j}-\pmb{e}_{i}$.

In order to prove (\ref{big_sumv2}), let us compute the number of chord diagrams 
of type $\{g,k,l;\pmb{e}_{2k+l};\nb\}$ with one marked chord in two different 
ways. On the one hand, there are $k$ possibilities to mark a chord in a diagram 
with $k$ chords, so the number in question is $k\, \Nd_{g,k,l}(\pmb{e}_{2k+l}, 
\nb)$. On the other hand, one can join any two marked points with a marked chord 
on any diagram with $k-1$ chords. We have described above all types of diagrams 
with $k-1$ chords that could potentially give a $k$-chord diagram of the 
required type after adding a chord.  

If one takes a diagram of type 
$\{g,k-1,l+2;\pmb{e}_{2k+l};\nb-\pmb{e}_j-\pmb{e}_{i-j}+\pmb{e}_{i+2}\}$ (let us 
assume that $j<i-j$), then there are $n_{i+2}+1$ possibilities to choose a 
boundary component with $i+2$ marked points. One then needs to connect two 
marked points on it with a chord in such a way that it splits into two boundary 
components with $j$ and $i-j$ marked points respectively. This can be done in 
$(i+2)$ different ways. If $j=i-j$, then there are $\frac{i+2}{2}=j+1$ ways to 
split the boundary component into two components with $j$ marked points each. 
For $j>i-j$ we get the same diagrams as in the case $j<i-j$, hence we get the 
first term on the r.h.s. of (\ref{big_sumv2}).

If one takes a diagram of type 
$\{g-1,k-1,l+2;\pmb{e}_{2k+l};\nb+\pmb{e}_j+\pmb{e}_{i+2-j}-\pmb{e}_{i}\}$ (let 
us assume that $j<i-j+2$), then there are $(n_{j}+1)$ ways to choose a boundary 
component with $j$ marked points, provided $j\neq i$. If $j=i$, then $j<2=i+2-j$ 
and so $j=1=i$ and the number of ways is then $n_1$. There are $(n_{i-j+2}+1)$ 
ways to choose a boundary component with $i-j+2$  marked points if $i\neq 
i+2-j$. If $i+2-j = i$, then the number of ways is $n_i$. One then needs to 
connect with a chord a marked point on one  boundary component with a marked 
point on the other one. This can be done in $j(i-j+2)$ different ways. If 
$j=i-j+2$, then there are $(n_{j}+2)(n_j+1)/2$ ways to choose a pair of boundary 
components with $j$ marked points, provided $i\neq j$. If we have $i=j$ and also 
$j=i-j+2$, then $i=2=j$ and the number of ways is $(n_2+1)n_2/2$. In both cases, 
there are $j^2$ ways to connect with a chord two points on different components. 
This gives us the second term on the r.h.s. of (\ref{big_sumv2}).         
\end{proof}

\begin{prop}\label{1b}
The one backbone generating function $F_1(x,y;s_0,s_1,\dots;t_1,t_2,\dots)$ is 
uniquely determined by the equation 
\begin{align}
\frac{\partial F_1}{\partial y}=LF_1\,,\quad L=L_0+x^2L_2\,, \label{pde}
\end{align}
together with the initial condition
\begin{align}
F_1(x,0;s_0,s_1,\dots;t_1,t_2,\dots)=\frac{1}{x^2}\sum_{i=1}^\infty s_i t_i\;. 
\label{ic}
\end{align}
Equivalently, we have
\begin{align}
F_1(x,y;s_0,s_1,\dots;t_1,t_2,\dots)=e^{yL}\left(\frac{1}{x^2}\sum_{i=1}^\infty 
s_i t_i\right)\;.
\end{align}
\end{prop}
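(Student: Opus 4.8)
The plan is to recognize the claimed PDE as nothing more than the recursion of Proposition~\ref{rec_forv2} repackaged at the level of generating functions, after which the exponential formula and uniqueness are purely formal. First I would record that in $F_1$ the coefficient of $x^{2g-2}y^k\pmb s^{\nb}t_{2k+l}$ is $\Nd_{g,k,l}(\eb_{2k+l},\nb)$, where $g=(1+k-n)/2$ and $l=\sum_{i\ge1}in_i$ are determined by $(k,\nb)$ through Euler's formula and the constraints listed in the introduction; in particular a single backbone fixes the backbone variable to be $t_{2k+l}$. I would check the initial condition \eqref{ic} by hand: at $y=0$ we have $k=0$, so a one-backbone diagram is just an interval carrying $l\ge1$ marked points, which is planar with a single boundary cycle through all $l$ points; hence $\nb=\eb_l$, $g=0$, and the count is $1$, giving $F_1|_{y=0}=x^{-2}\sum_{i\ge1}s_it_i$.

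Next I would write $F_1=\sum_{k\ge0}c_k\,y^k$ with $c_k=c_k(x;\pmb s;\pmb t)$ and observe that $\partial_yF_1=LF_1$ is equivalent, grading by $y$, to $k\,c_k=L\,c_{k-1}$ for all $k\ge1$. The substance is then to match the coefficient of $x^{2g-2}\pmb s^{\nb}t_{2k+l}$ on both sides; on the left this coefficient is $k\,\Nd_{g,k,l}(\eb_{2k+l},\nb)$. On the right I would analyze $L_0$ and $x^2L_2$ separately, in each case with source index $k'=k-1$ dictated by the $y$-grading. The operator $L_0$ leaves the $x$- and $t$-degrees fixed (so $g'=g$, while matching $t_{2k'+l'}=t_{2k+l}$ forces $l'=l+2$); matching the monomial forces the source spectrum $\nb'=\nb+\eb_{i+2}-\eb_j-\eb_{i-j}$, whose $(i+2)$-entry is $n_{i+2}+1$, and the prefactor $\tfrac12(i+2)$ is exactly that of $L_0$, reproducing verbatim the first sum of \eqref{big_sumv2}. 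The factor $x^2$ in $x^2L_2$ raises the genus by one, so it feeds on $g'=g-1$; after the shift $i\mapsto i+2$ the multiplication by $s_{i-2}$ and the weight $\tfrac12\,j(i-j)$ reproduce the outer structure of the second sum of \eqref{big_sumv2}.

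The hard part will be the combinatorial factor coming from the second-order derivative in $L_2$. Acting on $\pmb s^{\nb'}$ with $\partial^2/\partial s_j\partial s_{i-j}$ produces $n'_jn'_{i-j}$ off the diagonal and $n'_j(n'_j-1)$ on the diagonal $j=i-j$, where the $n'$ are entries of $\nb'=\nb+\eb_j+\eb_{i-j}-\eb_{i-2}$. Expressing these entries back in terms of $\nb$ generates precisely the Kronecker symbols $\delta_{j,i+2-j}$, $\delta_{i,j}$, and $\delta_{j,2}$ appearing in \eqref{big_sumv2}: the symbol $\delta_{j,i+2-j}$ converts the off-diagonal product into the correct diagonal value, while $\delta_{i,j}$ and $\delta_{j,2}$ record the collisions of the removed index $i-2$ with the two created indices. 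I would verify the three coincidence cases ($j=i-j$, $j=i$, and $j=2$) one at a time to confirm that the factor $(n_j+1+\delta_{j,i+2-j}-\delta_{i,j})(n_{i+2-j}+1-\delta_{j,2})$ is exactly what the differential operator delivers; this case check is the only genuinely delicate point.

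Having identified $k\,c_k=L\,c_{k-1}$ with \eqref{big_sumv2}, Proposition~\ref{rec_forv2} establishes the PDE \eqref{pde}. Finally, since \eqref{pde} is first order in $y$, the initial value \eqref{ic} determines all coefficients recursively via $c_k=\tfrac1k\,L\,c_{k-1}$, whence $c_k=\tfrac1{k!}L^k c_0$ and $F_1=e^{yL}\bigl(x^{-2}\sum_{i\ge1}s_it_i\bigr)$; this simultaneously yields uniqueness and the closed exponential form.
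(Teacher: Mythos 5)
Your proposal is correct and follows essentially the same route as the paper: the paper's proof likewise reduces the PDE to the recursion of Proposition~\ref{rec_forv2} via the $y$-grading (declaring the coefficient match ``straightforward to check''), verifies the $y=0$ initial condition against the unique zero-chord diagrams, and obtains uniqueness and the closed form $F_1=e^{yL}\bigl(x^{-2}\sum_{i\geq 1}s_it_i\bigr)$ from the first-order structure. Your only addition is to actually carry out the coefficient bookkeeping for $L_0$ and $x^2L_2$, including the Kronecker-delta collision cases, which is a correct and welcome expansion of the step the paper leaves implicit.
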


\begin{proof} It is straightforward to check that the equation $\frac{\partial 
F_1}{\partial y}=LF_1$ is equivalent to formula 
(\ref{big_sumv2}). Moreover, every chord diagram of type 
$\{g,k,l;\pmb{e}_{2k+l};\nb\}$ can be obtained from the unique diagram of type 
$\{0,0,2k+l;\pmb{e}_{2k+l}; \pmb{e}_{2k+l}\}$ by adding $k$ chords to it. On the 
level of $F_1$, this amounts to applying the operator $L$ to 
$x^{-2}s_{2k+l}t_{2k+l}$ precisely $k$ times and taking the coefficient of the 
monomial $x^{2g-2}t_{2k+l}s_0^{n_0}s_1^{n_1}\dots$ in 
$L^k(x^{-2}s_{2k+l}t_{2k+l})$ which is equal to 
$k!\,\Nd_{g,k,l}(\pmb{e}_{2k+l},\nb)$ by formula (\ref{big_sumv2}).
\end{proof}

\begin{rem}\label{rem_coeff} Proposition \ref{1b} allows us to compute the 
numbers $\Nd_{g,k,l}(\pmb{e}_{2k+l},\nb)$ reasonably quickly. For instance, take 
$g=0$ and put $s_0=1,\;s_i=qy^is^i,\;i\geq 1,\;t_j=1,\;j\geq 1$. The several 
first coefficients of 
$x^2F_1(x,y^2;1,qys,q(ys)^2,\dots;1,1,\dots)$ in $y$ for $x=0$ and $k=0,\dots,8$ 
are:
\begin{align*}
&k=0:\quad 1\\
&k=1:\quad q s\\
&k=2:\quad q s^2 + 1 \\
&k=3:\quad q s^3  + 3 q s\\
&k=4:\quad q s^4  + (4 q + 2 q^2) s^2+2   \\
&k=5:\quad q s^5  + (5 q + 5 q^2) s^3  + 10 q s\\
&k=6:\quad q s^6  + (6 q + 9 q^2) s^4  + (15 q + 15 q^2) s^2+5\\ 
&k=7:\quad q s^7  + (7 q + 14 q^2) s^5  + (21 q + 42 q^2  + 7 q^3)s^3  + 35 q 
s\\
&k=8:\quad q s^8  + (8 q + 20 q^2)s^6  + (28 q + 84 q^2  + 28 q^3)s^4 + (56 q + 
84 q^2)s^2+ 14 
\end{align*}
In Section \ref{secmi}, we will derive these same polynomials by matrix 
integration methods.
\end{rem}

\section{The multibackbone case}
Let us proceed with the multibackbone case. 

\begin{prop} \label{qrec}
The numbers $\Nd_{g,k,l}(\bb,\nb)$ obey the following recursion relation:
\begin{align}
k\, \Nd_{g,k,l}&(\bb, \nb)=\nonumber\\
\frac{1}{2}&\sum_{i=0}^\infty \sum_{j=0}^i (i+2)(n_{i+2}+1)\, 
\Nd_{g,k-1,l+2}(\bb, \nb-\pmb{e}_j-\pmb{e}_{i-j}+\pmb{e}_{i+2})\;+\nonumber\\
\frac{1}{2}&\sum_{i=0}^\infty \sum_{j=1}^{i+1} 
j(i+2-j)(n_j+1+\delta_{j,i+2-j}-\delta_{i,j})(n_{i+2-j}+1-\delta_{j,2})\times 
\nonumber\\
&\hspace{0.4in}\Nd_{g-1,k-1,l+2}(\bb, 
\nb+\pmb{e}_j+\pmb{e}_{i+2-j}-\pmb{e}_{i})+\nonumber\\
\frac{1}{2}&\sum_{i=0}^\infty\; \sum_{j=1}^{i+1}\; \sum_{g_1+g_2=g} \;\; 
\sum_{k_1+k_2=k-1}\;\;   
\sum_{\nb^{(1)}+\nb^{(2)}=\nb-\eb_{i}} \;\; 
\sum_{\bb^{(1)}+\bb^{(2)}=\bb}\nonumber\\
&\hspace{0.3in}j(i+2-j)(n^{(1)}_j+1)(n^{(2)}_{i+2-j}+1)\, 
\frac{b!}{b^{(1)}!b^{(2)}!}\times\nonumber\\ 
&\hspace{0.4in}\Nd_{g_1,k_1,l_1+j}(\bb^{(1)}, 
\nb^{(1)}+\eb_{j})\,\Nd_{g_2,k_2,l_2+i+2-j}(\bb^{(2)}, \nb^{(2)}+\eb_{i+2-j}),
\label{multi_comb}
\end{align}
where 
$$b^{(r)}=\sum_{i=1}^\infty b_i^{(r)},\quad l_r= \sum_{i=1}^\infty 
in_i^{(r)},\quad\sum_{i=0}^\infty n_i^{(r)}=k_r-2g_r-b^{(r)}+2\,,\quad 
r=1,2\,.$$
\end{prop}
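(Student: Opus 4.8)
The plan is to count in two ways the number of connected partial chord diagrams of type $\{g,k,l;\bb;\nb\}$ carrying one distinguished chord. On the one hand, a diagram with $k$ chords admits $k$ choices of distinguished chord, so this number equals $k\,\Nd_{g,k,l}(\bb,\nb)$, the left-hand side of (\ref{multi_comb}). On the other hand, I would erase the distinguished chord, retaining its two endpoints as marked points; this produces a diagram with $k-1$ chords and $l+2$ marked points, and the three terms on the right-hand side arise by sorting the configurations according to the effect of the erasure on the boundary cycles and on the connectivity of the underlying diagram.

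First I would dispatch the two cases in which the diagram stays connected, exactly as in the proof of Proposition~\ref{rec_forv2}. If the erased chord lay on two distinct boundary components, these merge (components with $j$ and $i-j$ marked points combining into one with $i+2$ marked points) and the genus is unchanged; reconstructing by re-inserting a chord reproduces the first term. If instead a single boundary component traversed the chord twice, erasure splits it into components with $j$ and $i+2-j$ marked points and drops the genus by one, reproducing the second term together with its Kronecker-delta corrections $\delta_{j,i+2-j}$, $\delta_{i,j}$, $\delta_{j,2}$. Since erasing a chord merely converts its degree-three endpoints into degree-two marked points on the same backbones, the backbone spectrum $\bb$ is unaffected, so these two terms are verbatim the one-backbone expressions of Proposition~\ref{rec_forv2} with $\pmb{e}_{2k+l}$ replaced by a general $\bb$.

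The genuinely new case, occurring only for $b\geq 2$, is when the distinguished chord is separating, i.e. its removal disconnects the diagram into two connected pieces. Here the chord is again traversed twice by a single boundary component (with $i$ marked points), and erasure splits this component so that one part, with $j$ marked points, lies in the first piece and the other, with $i+2-j$ marked points, lies in the second; an Euler characteristic count then yields $g=g_1+g_2$, $k=k_1+k_2+1$, $\bb=\bb^{(1)}+\bb^{(2)}$, and the marked-point transfer $\nb^{(1)}+\nb^{(2)}=\nb-\eb_i$ with $\eb_j$ and $\eb_{i+2-j}$ restored in the two pieces. To reconstruct, I would choose the two boundary components to be joined --- $(n^{(1)}_j+1)$ choices in the first piece and $(n^{(2)}_{i+2-j}+1)$ in the second --- then choose a marked point on each, giving the factor $j(i+2-j)$, and finally distribute the $b$ ordered backbones between the pieces, giving the multinomial $b!/(b^{(1)}!\,b^{(2)}!)$. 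Summing over ordered pairs of pieces and over $j$ double counts each unordered configuration, whence the overall factor $\tfrac12$; this produces the third term.

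The main obstacle will be the bookkeeping in this separating case: verifying genus additivity and the marked-point transfers precisely, justifying the backbone-distribution multinomial $b!/(b^{(1)}!\,b^{(2)}!)$ against the ordered convention for backbones, and confirming that the factor $\tfrac12$ correctly compensates the piece-swap symmetry $j\leftrightarrow i+2-j$ without introducing spurious diagonal corrections of the kind that appear in the second term. The first two terms, being inherited unchanged from Proposition~\ref{rec_forv2}, require no new work beyond the observation that $\bb$ is invariant under erasure of a chord.
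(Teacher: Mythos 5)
Your proposal is correct and follows essentially the same route as the paper: the first two terms are inherited verbatim from the one-backbone recursion (Proposition~\ref{rec_forv2}) since erasing a chord leaves the backbone spectrum $\bb$ untouched, and the third term arises from the separating-chord case with exactly the counting factors you describe --- $(n^{(1)}_j+1)(n^{(2)}_{i+2-j}+1)$ choices of boundary components, $j(i+2-j)$ choices of marked points, and the multinomial $b!/(b^{(1)}!\,b^{(2)}!)$ for distributing the ordered backbones. The bookkeeping concerns you raise (genus additivity, the $\tfrac12$ against the piece-swap symmetry) are legitimate but resolve exactly as you anticipate, so no further work is needed.
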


\begin{proof}
The multibackbone case is similar to the one backbone case, and the derivation 
of the first two sums on the r.h.s. of (\ref{multi_comb}) repeats verbatim the 
proof of (\ref{big_sumv2}), cf. Proposition \ref{rec_for}.  The only difference 
is that erasing a chord can split the diagram into two connected components (see 
Fig. \ref{cd_c}).

\begin{figure}[hbt]%
 \begin{center}
 \includegraphics[width=8cm]{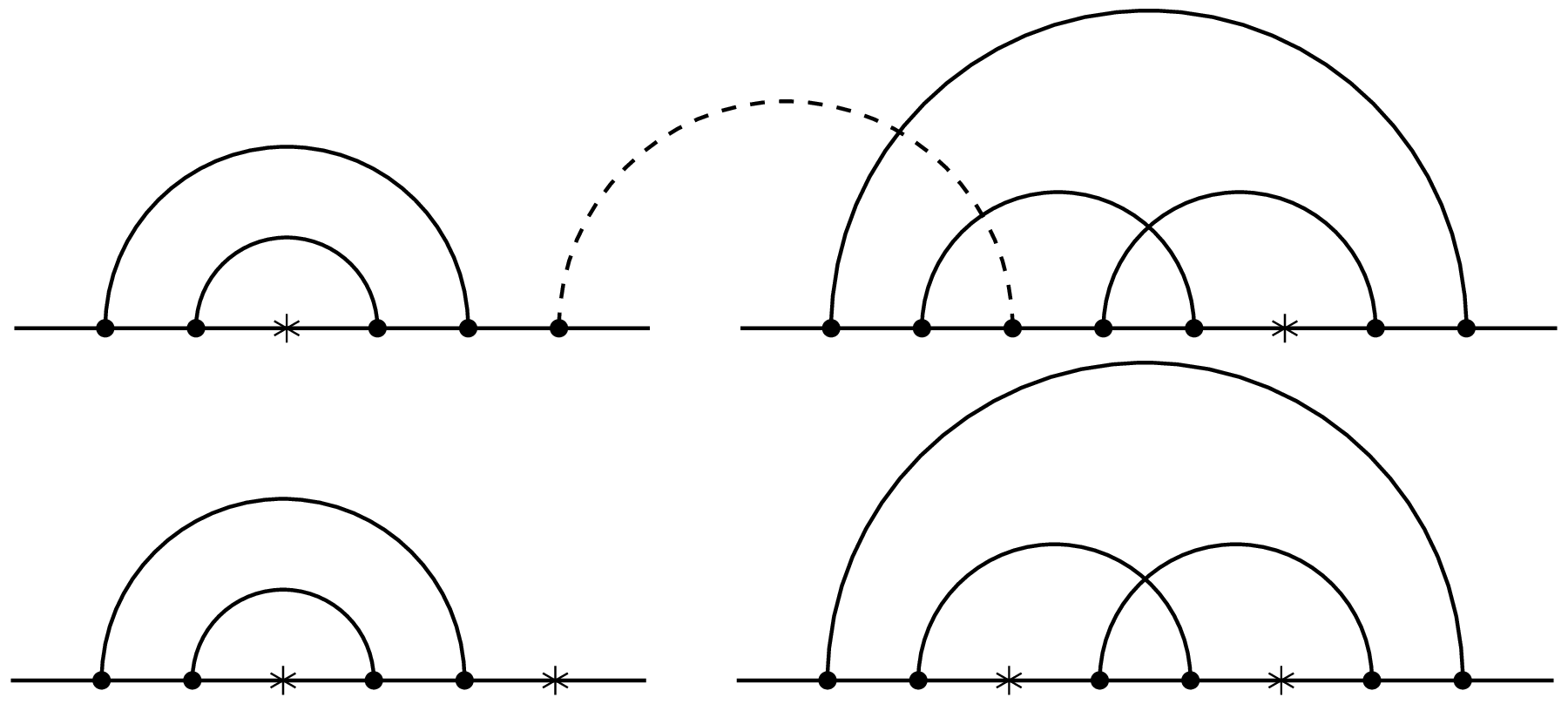}
 \caption{Erasing the dashed chord splits the chord diagram of type 
$\{1,6,2;\pmb{e}_6+\pmb{e}_8; 2\pmb{e}_0+2\pmb{e}_1\}$ into two diagrams of 
types 
 $\{0,2,2;\pmb{e}_{6}; \pmb{e}_0+2\pmb{e}_1\}$ and $\{1,3,2;\pmb{e}_{8}; 
\pmb{e}_0+\pmb{e}_2\}$}
 \label{cd_c}%
\end{center}
\end{figure}

 This possibility is encoded in the 6-fold sum on the r.h.s. of 
(\ref{multi_comb}). There are exactly 
$$j(i+2-j)n^{(1)}_jn^{(2)}_{i+2-j} \frac{b!}{b^{(1)}!b^{(2)}!}$$ 
ways to get a chord diagram of a type $\{g,k,l;\bb;\nb\}$ from two diagrams of 
types $\{g_1,k_1,l_1;\bb^{(1)}; \nb^{(1)}\}$ and $\{g_2,k_2,l_2;\bb^{(2)}; 
\nb^{(2)}\}$. Namely, there are $n^{(1)}_j$ ways to choose a boundary component 
with $j$ marked points on the first diagram, and there are $n^{(2)}_{i+2-j}$ 
ways to choose a boundary component with $i-j+2$ marked points on the second 
diagram. There are $j(i-j+2)$ ways to connect a marked point on the first 
boundary component with a marked point on the second one. The remaining factor 
$\frac{b!}{b^{(1)}!b^{(2)}!}$ counts the number of different ordered splittings 
of a $b$-backbone diagram into two connected ones that contain $b^{(1)}$ and  
$b^{(2)}$ backbones respectively.
\end{proof}

\begin{prop}\label{mb}
The generating function $F(x,y;s_0,s_1,\dots;t_1,t_2,\dots)$ is uniquely 
determined by the equation 
\begin{align}
\frac{\partial F}{\partial y}=(L+x^2Q)F, \label{pde2}
\end{align}
where $L=L_0+x^2L_2$ and 
\begin{align}
QF=\frac{1}{2}\sum_{i=2}^\infty s_{i-2} \sum_{j=1}^{i-1} j(i-j)\,\frac{\partial 
F}{\partial s_{j}}\cdot\frac{\partial F}{\partial s_{i-j}}\;,
\end{align}
together with the same initial condition
\begin{align}
F(x,0;s_0,s_1,\dots;t_1,t_2,\dots)=\frac{1}{x^2}\sum_{i=1}^\infty s_i t_i\;. 
\label{ic_multi}
\end{align}
Equivalently, the multibackbone partition function
$e^F$ satisfies the equation 
$$\frac{\partial e^F}{\partial y}=Le^F$$ and is explicitly given by
\begin{align}
e^{F(x,y;s_0,s_1,\dots;t_1,t_2,\dots)}=e^{yL}\left(e^{\frac{1}{x^2}\sum_{i=1}
^\infty s_i t_i}\right)\;.
\end{align}
\end{prop}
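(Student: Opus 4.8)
The plan is to show that the partial differential equation $\partial_y F=(L+x^2Q)F$ together with the initial condition (\ref{ic_multi}) is precisely the generating-function translation of the recursion (\ref{multi_comb}) from Proposition \ref{qrec}, and then to reformulate this equation in exponentiated form. First I would recall the standard dictionary between combinatorial operations on chord diagrams and differential operators on the generating function: differentiation $\partial/\partial s_i$ removes a boundary component carrying $i$ marked points (bringing down a factor $n_i$ counting the available such components), while multiplication by $s_j$ creates one. With this in mind, I would verify term-by-term that applying $L_0+x^2L_2+x^2Q$ to $F$ and extracting the coefficient of $x^{2g-2}y^{k-1}\pmb{s}^{\nb}\pmb{t}^{\bb}$ reproduces exactly the three sums on the right-hand side of (\ref{multi_comb}), while the left-hand side $\partial F/\partial y$ produces the factor $k$ multiplying $\Nd_{g,k,l}(\bb,\nb)$. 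The key new feature beyond Proposition \ref{1b} is the quadratic operator $Q$, which encodes the separating case: the product $(\partial F/\partial s_j)\cdot(\partial F/\partial s_{i-j})$ picks one boundary component from each of two independent connected subdiagrams, the prefactor $j(i-j)$ counts the ways to join chosen marked points, and the resulting $x^2$ accounts for the genus/Euler-characteristic shift. The binomial factor $b!/(b^{(1)}!b^{(2)}!)$ in (\ref{multi_comb}), which tracks ordered splittings of backbones, is supplied automatically by the $1/b!$ normalizations in the definitions of $F_b$ together with the Cauchy-product structure of $F=\sum_b F_b$, so that $Q$ acting on the full $F$ correctly distributes backbones between the two factors.

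The main obstacle I anticipate is the careful bookkeeping of the various Kronecker-delta corrections appearing in (\ref{multi_comb})—the factors $(n_j+1+\delta_{j,i+2-j}-\delta_{i,j})$ and $(n_{i+2-j}+1-\delta_{j,2})$—which arise from coincidences among boundary-component labels and from symmetric versus asymmetric splittings. These corrections in the recursion must match exactly the combinatorial factors that the differential operators generate when indices collide (for instance when the two derivatives in $Q$ or in $L_2$ act on the same variable $s_j$). Reconciling the symmetry factors of $\frac12$ in $L_0,L_2,Q$ with the $j<i-j$ versus $j=i-j$ case analysis in Proposition \ref{qrec} is exactly the delicate point; I would handle it by summing over all ordered pairs $(j,i-j)$ and letting the $\frac12$ absorb the double counting, checking the diagonal terms separately. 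Since this matching is precisely the content already established combinatorially in Proposition \ref{qrec}, the verification is mechanical once the operator dictionary is fixed.

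Finally, to pass to the exponentiated statement, I would compute $\partial_y e^F=e^F\,\partial_y F=e^F(L+x^2Q)F$ and show this equals $L\,e^F$. For the purely linear part this is immediate since $L$ is a first-order operator, giving $e^F L F=L\,e^F$ when $L$ acts as a derivation composed with a linear piece; the point is that the second-order operator $x^2L_2$ acting on $e^F$ produces, via the product rule, both the linear term $x^2 L_2 F$ and a quadratic term that is exactly $x^2 Q F$ built from the two first derivatives of $F$. In other words, $L\,e^F=e^F\bigl(L_0 F+x^2L_2 F+x^2 Q F\bigr)$, which is precisely $e^F\cdot\partial_y F$. Thus $e^F$ satisfies the linear equation $\partial_y e^F=L\,e^F$, whose unique solution with initial value $\exp\bigl(x^{-2}\sum_{i\ge1}s_it_i\bigr)$ is $e^{yL}\exp\bigl(x^{-2}\sum_{i\ge1}s_it_i\bigr)$. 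Uniqueness follows because $L$ raises the $y$-degree by one, so the recursion determines every coefficient inductively in $k$ from the initial condition.
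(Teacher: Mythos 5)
Your proposal is correct and follows essentially the same route as the paper: translating the recursion of Proposition \ref{qrec} into the equation $\partial F/\partial y=(L+x^2Q)F$, with the six-fold sum corresponding to the quadratic term $QF$, and establishing uniqueness by determining the coefficient of $y^{k+1}$ inductively from the initial condition. Your explicit verification that $Le^F=e^F(L_0F+x^2L_2F+x^2QF)$ via the product rule for the second-order operator $L_2$ is exactly the (unstated) computation behind the paper's passage to the linear equation $\partial e^F/\partial y=Le^F$.
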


\begin{proof}  As in the one backbone case, a straightforward computation shows 
that recursion (\ref{multi_comb}) is equivalent to the equation $\frac{\partial 
F}{\partial y}=(L+x^2Q)F$ (where the 6-fold sum translates into the quadratic 
term $QF$). Moreover, every chord diagram of type $\{g,k,l;\nb;\bb\}$ can be 
obtained from the disjoint collection of $b$ diagrams of type 
$\{0,0,i;\pmb{e}_{i},\pmb{e}_{i}\}$ (each taken with multiplicity $b_i$) by 
connecting them with $k$ chords. Let $F^{(k)}(x;s_0,s_1,\dots;t_1,t_2, \dots)$ 
be the coefficient of $y^k$ in the total generating function $F$, so 
$$F^{(k+1)}(x;s_0,s_1,\dots;t_1,t_2, \dots)$$
is the the coefficient of $y^{k+1}$ in $$y(L+x^2Q)\left(\sum_{i=0}^{k} 
y^iF^{(i)}(x;s_0,s_1,\dots;t_1,t_2, \dots)\right),$$
where $F^{(0)}(x;s_0,s_1,\dots;t_1,t_2, \dots)=\frac{1}{x^2}\sum_{i\geq 
1}s_it_i$.
\end{proof}

\begin{rem}
The enumeration problem of {\sl complete} (i.e., giving a closed surface) 
orientable gluings of two and three polygons (or equivalently chord diagrams on 
$2$ or $3$ backbones without marked points) was solved in \cite{APRW} and 
independently in \cite{PR}by different methods.  
\end{rem}

\section{Non-orientable polygon gluings}\label{sec:nonor}

This section is dedicated to proving the following result:

\begin{prop}
The one backbone generating function $\tilde 
F_1(x,y;s_0,s_1,\dots;t_1,t_2,\dots)$ is uniquely determined by the equation 
\begin{align}
\frac{\partial \tilde F_1}{\partial y}=(L_0 + x L_1 + 2x^2 L_2) \tilde F_1 
\label{pde_non}
\end{align}
together with the initial condition
\begin{align}
\tilde F_1(x,0;s_0,s_1,\dots;t_1,t_2,\dots)=\frac{1}{x^2}\sum_{i=1}^\infty s_i 
t_i\;. \label{ic_non}
\end{align}
The generating function $\tilde F(x,y;s_0,s_1,\dots;t_1,t_2,\dots)$ is uniquely 
determined by the equation 
\begin{align}
\frac{\partial \tilde F}{\partial y}=(L_0 + x L_1 + 2x^2 L_2 + 2x^2Q) \tilde F 
\;,\label{pde2_non}
\end{align}
together with the same initial condition (\ref{ic_non}).
\label{nonor_thm}
\end{prop}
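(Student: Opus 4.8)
The plan is to run the same argument that yielded the orientable equations in Propositions \ref{rec_for}--\ref{mb}, now with \emph{two-colored} chords (untwisted and twisted) and with the cross-cap number $h$ playing the role of $2g$. For one backbone, I would first establish the analogue of the recursion (\ref{big_sumv2}) for the numbers $\widetilde{\Nd}_{h,k,l}(\eb_{2k+l},\nb)$ by counting diagrams equipped with one marked chord in two ways: $k$ ways to select a marked chord in a $k$-chord diagram, versus all ways of attaching a chord to a diagram with $k-1$ chords. The same Kronecker-delta symmetry corrections as in (\ref{big_sumv2}) will appear and can be handled exactly as before.

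The new content is the enumeration of chord-attachment moves. Attaching an \emph{untwisted} chord with both endpoints on a single boundary cycle splits that cycle into two and leaves $h$ unchanged, reproducing $L_0$; attaching an untwisted chord joining two distinct boundary cycles of one component merges them and raises $h$ by $2$, reproducing $x^2L_2$. The twisted colour supplies two further moves. A twisted chord with both endpoints on one boundary cycle leaves a single boundary cycle but creates one cross-cap, so it raises $h$ by $1$; a boundary cycle carrying $i+2$ marked points admits $\binom{i+2}{2}=\tfrac12(i+2)(i+1)$ such attachments, which is exactly the coefficient in $L_1$, and the unit increase of $h$ produces the weight $x^1$. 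A twisted chord joining two distinct cycles performs the same merge, with the same change $h\mapsto h+2$ and the same resulting boundary point spectrum as its untwisted counterpart, so the two colours together double that contribution to $2x^2L_2$. Feeding these moves into the generating function exactly as in the proof of Proposition \ref{1b} --- every one-backbone diagram arises from the trivial diagram of type $\{0,0,2k+l;\eb_{2k+l};\eb_{2k+l}\}$ by $k$ successive chord attachments, so $\partial/\partial y$ realizes a single attachment step --- gives $\partial\tilde F_1/\partial y=(L_0+xL_1+2x^2L_2)\tilde F_1$ with the stated initial condition, together with the uniqueness and exponential form.

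For the multibackbone function $\tilde F$ I would argue through the exponential, as in Proposition \ref{mb}. Passing from $\tilde F$ to $e^{\tilde F}$ replaces the connected count by the count of arbitrary collections of backbones, and a chord joining two \emph{distinct} connected components --- in either colour --- is precisely the move that the second-order operator $L_2$ generates when acting across a product. Thus $e^{\tilde F}$ satisfies the \emph{linear} equation $\partial e^{\tilde F}/\partial y=(L_0+xL_1+2x^2L_2)e^{\tilde F}$; since $L_0$ and $L_1$ are first order while $L_2$ is second order, the product rule gives $e^{-\tilde F}(L_0+xL_1)e^{\tilde F}=(L_0+xL_1)\tilde F$ and $e^{-\tilde F}L_2e^{\tilde F}=L_2\tilde F+Q\tilde F$, the quadratic cross term being exactly $Q$. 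This yields $\partial\tilde F/\partial y=(L_0+xL_1+2x^2L_2+2x^2Q)\tilde F$, the weight $2x^2$ on $Q$ being inherited from $2x^2L_2$.

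The step I expect to be the main obstacle is the bookkeeping for the twisted moves. I must verify that a twisted chord with both endpoints on one boundary cycle genuinely removes a single cross-cap and keeps that cycle connected --- rather than splitting it or changing $h$ by $2$ --- so that it contributes the $x^1$-weighted $L_1$ with coefficient $\tfrac12(i+2)(i+1)$ and nothing else; and, for chords joining two distinct cycles or two components, that the twisted and untwisted versions yield identical type data $\{h,k,l;\bb;\nb\}$ and are honestly distinct diagrams, so that they combine into a clean factor of $2$ with no overcounting coming from diagram symmetries. Once these local identifications are secured, matching the recursion to the operators $L_0,L_1,L_2,Q$ and passing to the differential equations is routine.
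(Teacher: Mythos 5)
Your argument is correct and follows essentially the same route as the paper: its proof likewise observes that a twisted chord with both endpoints on one boundary cycle adds a cross-cap ($h\mapsto h+1$) without splitting the cycle, yielding the $xL_1$ term with coefficient $\tfrac12(i+2)(i+1)$, while twisted and untwisted chords joining two distinct boundary cycles produce identical type data and hence double the $x^2L_2$ (and, in the multibackbone case, the $x^2Q$) contribution. Your derivation of the $2x^2Q$ term from the linear equation satisfied by $e^{\tilde F}$ is just the equivalent partition-function formulation already recorded in Proposition \ref{mb}, so it does not constitute a genuinely different approach.
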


\begin{proof}
The non-orientable case is similar to the orientable one. On the combinatorial 
level, the difference is that when one glues two sides on the same  connected 
component of a boundary with a twist, one adds a cross-cap to the surface 
without changing the number of boundary components. On the level of the 
generating function $\tilde F_1$, this adds the term $x L_1\tilde F_1$ on the 
r.h.s. of (\ref{pde_non}). If one glues two sides belonging to distinct 
components of the boundary, then there is no difference between the twisted and 
untwisted gluings, so that one just has to count the term $x^2L_2\tilde F_1$ 
twice. The multibackbone generating function is treated analogously.
\end{proof}

Using Proposition \ref{nonor_thm}, we can compute several first numbers 
$\widetilde{\Nd}_{h,k,l}(\bb,\pmb{e}_{2k+l})$. Consider, for example, the 
decagon gluings, i.e., $2k+l=10$. For $x=1$, the coefficients of the generating 
series $\tilde F_1(1,y;\pmb{e}_{10};s_0,s_1,\dots)$ in $y$ are listed below for 
$k=0,1,\dots,5$:
\begin{align*}
k=0:\quad &s_{10}\;,\\
k=1:\quad &10s_0s_8+10s_1s_7+10s_2s_6+10s_3s_5+5s_4^2+45s_8\;,\\
k=2:\quad &45s_0^2s_6+90s_4s_0s_2+90s_3s_1s_2+325s_0s_6+300s_1s_5+285s_2s_4\\
&+1050s_6+45s_4s_1^2+45s_0s_3^2+140s_3^2+15s_2^3+90s_0s_1s_5\;, \\
k=3:\quad 
&1850s_0s_1s_3+360s_0^2s_1s_3+1000s_0^2s_4+360s_0s_1^2s_2+900s_0s_2^2\\
&+870s_1^2s_2+4900s_4s_0+4100s_3s_1+120s_0^3s_4+30s_1^4\\
&+180s_0^2s_2^2+1920s_2^2+8610s_4\;,\\
k=4:\quad 
&1720\,s_{0}^3s_{2}+2465\,s_{{0}}^{2}s_{{1}}^{2}+8890\,s_{0}^{2}s_{{2}}+7940\,s_
{{0}}s_{{1}}^{2}+21930\,s_{{0}}s_{{2}}\\
&+420\,s_{{0}}^{3}s_{{1}}^{2}+210\,s_{{0}}^{4}s_{{2}}+9120\,s_{{1}}^{2}+22905\,
s_{{2}}\;,\\
k=5:\quad &42s_0^6+386s_0^5+2290s_0^4+7150s_0^3+12143s_0^2+8229s_0\;.
\end{align*}

\section{Enumeration of chord diagrams with fixed boundary lengths}

We will prove Theorem \ref{thm2} in analogy to Theorem \ref{thm1} by 
combinatorial methods. The partial differential equation on $G$ is equivalent to 
the following
\begin{prop}
The numbers $\Nd_{g,k,b}(\boldsymbol{p})$  obey the following recursion 
relation:
\begin{align}
k\,&\Nd_{g,k,b}(\boldsymbol{p}) = \nonumber\\
&\frac{1}{2} \sum_{i=1}^\infty\sum_{j=0}^i i 
(p_{i}+1-\delta_{i,j+1}-\delta_{j,1})\,\Nd_{g,k-1,b}(\boldsymbol{p}+\eb_{i}-\eb_
{j+1}-\eb_{i-j+1})
+\nonumber\\
&\frac{1}{2} \sum_{i=2}^\infty\sum_{j=1}^{i-1} 
j(i-j)(p_{j}+1)(p_{i-j}+1+\delta_{j,i-j})\,
\Nd_{g-1,k-1,b}(\boldsymbol{p}-\eb_{i+2}+\eb_{j}+\eb_{i-j})+\nonumber\\
&\frac{1}{2} \sum_{i=2}^\infty\sum_{j=1}^{i-1} j(i-j)\sum_{g_1+g_2=g}\;\; 
\sum_{k_1+k_2=k-1}\;\; \sum_{b_1+b_2=b}\;\; 
\sum_{\boldsymbol{p}^{(1)}+\boldsymbol{p}^{(2)}=\boldsymbol{p}-\eb_{i+2}}  
\frac{b!}{b_1! b_2!}\times \nonumber \\
&\hspace{0.8in} (p^{(1)}_{j}+1)(p^{(2)}_{i-j}+1)\,
\Nd_{g_1,k_1,b_1}(\boldsymbol{p}^{(1)}+\eb_{j}) \,
\Nd_{g_2,k_2,b_2}(\boldsymbol{p}^{(2)}+\eb_{i-j})\,.
\label{big_sum_dual}
\end{align}

\label{prop_dual}
\end{prop}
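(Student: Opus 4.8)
The plan is to prove the recursion (\ref{big_sum_dual}) by the same marked-chord double counting used for Propositions \ref{rec_forv2} and \ref{qrec}, with boundary \emph{point} counts replaced throughout by boundary \emph{lengths}. Concretely, I would count the complete chord diagrams of type $\{g,k,b;\boldsymbol{p}\}$ carrying one distinguished chord in two ways. On one hand there are $k$ chords to distinguish, so this number is $k\,\Nd_{g,k,b}(\boldsymbol{p})$, the left-hand side. On the other hand, every such diagram arises from a unique $(k-1)$-chord diagram by reinserting the distinguished chord; erasing that chord and \emph{smoothing} its two endpoints back into the backbones -- so that we stay within complete diagrams, in contrast to the point-spectrum setting where the endpoints are retained as marked points -- produces a complete $(k-1)$-chord diagram whose type I classify.

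The classification has three cases, matching the three terms. In the first, the distinguished chord is traversed by two distinct boundary cycles of the $k$-diagram, of lengths $j+1$ and $i-j+1$; erasing it merges these into a single cycle of length $i$ and changes neither the genus nor the number of components, yielding the first (genus $g$) term. The factor $i$ records the number of positions at which the chord may be reinserted to recover the split, and $(p_i+1-\delta_{i,j+1}-\delta_{j,1})$ counts the admissible length-$i$ cycles of the $(k-1)$-diagram with the degenerate coincidences removed. In the second, a single boundary cycle of the $k$-diagram, of length $i+2$, traverses the chord twice; erasing it splits this cycle into cycles of lengths $j$ and $i-j$ while lowering the genus by one (by $b-k+n=2-2g$), giving the second (genus $g-1$) term, where $j(i-j)$ counts the ways to rejoin the two chosen cycles and $\delta_{j,i-j}$ corrects for equal lengths. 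In the third, the same splitting additionally disconnects the diagram into two complete diagrams, one inheriting the length-$j$ cycle and the other the length-$(i-j)$ cycle; this is the quadratic sixfold sum, in which $\frac{b!}{b_1!b_2!}$ records the interleavings of the ordered backbones and the genera add, $g_1+g_2=g$. The overall $\frac12$ in each term compensates for the two orderings of the unordered pair of cycles (or components).

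The one genuinely new ingredient relative to Propositions \ref{rec_forv2} and \ref{qrec} is the length arithmetic. Since the total boundary length is $\sum_i i\,p_i=2k+b$, of which $2k$ comes from chord traversals and $b$ from backbone undersides, erasing a chord decreases the total length by exactly $2$: the underside contribution is unaffected because the number $b$ of backbones is unchanged, so the entire decrease is carried by the two sides of the erased chord. I would verify that this decrease is distributed correctly in each case -- the merged cycle of the first case has length $(j+1)+(i-j+1)-2=i$, and the split cycles of the second and third cases have total length $(i+2)-2=i$ -- which is exactly what forces the shifts $\eb_i-\eb_{j+1}-\eb_{i-j+1}$, $-\eb_{i+2}+\eb_j+\eb_{i-j}$, and the analogous splitting of $\eb_{i+2}$ across the two components.

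I expect the main obstacle to be the precise determination of the combinatorial factors, in particular the factors $i$ and $j(i-j)$ counting the ways to attach the new chord's endpoints so as to realize a prescribed length split, together with the bookkeeping of all degenerate configurations recorded by the $\delta$-symbols and by the $+1$'s in the cycle multiplicities. As in the proof of (\ref{big_sumv2}), these require separating the cases where the two produced (or joined) cycles have equal length, where a produced cycle coincides in length with the source cycle, and where the chord has both endpoints consecutive along a backbone; each coincidence shifts a multiplicity by one and is recorded by the corresponding $\delta$. Once these factors are verified, the three contributions assemble into (\ref{big_sum_dual}), which -- exactly as (\ref{multi_comb}) is equivalent to $\partial_y F=(L+x^2Q)F$ -- is equivalent to the equation $\partial_y G=(K_0+x^2K_2+x^2R)G$ of Theorem \ref{thm2}.
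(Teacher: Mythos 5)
Your proposal follows essentially the same route as the paper's own (much terser) argument: a marked-chord double count in which erasing the distinguished chord together with its endpoints either merges two boundary cycles (genus preserved), splits one cycle traversed twice (genus drops by one), or disconnects the diagram, with the total boundary length dropping by exactly $2$ since $\sum_i ip_i=2k+b$. Your identification of the length shifts, the genus bookkeeping via Euler's formula, the multinomial factor $b!/(b_1!b_2!)$, and the role of the $\delta$-corrections and the overall $\tfrac12$ all match the intended proof of Proposition~\ref{prop_dual}, and in fact supply more detail than the paper records.
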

\begin{figure}[hbt]%
 \begin{center}
 \includegraphics[width=8cm]{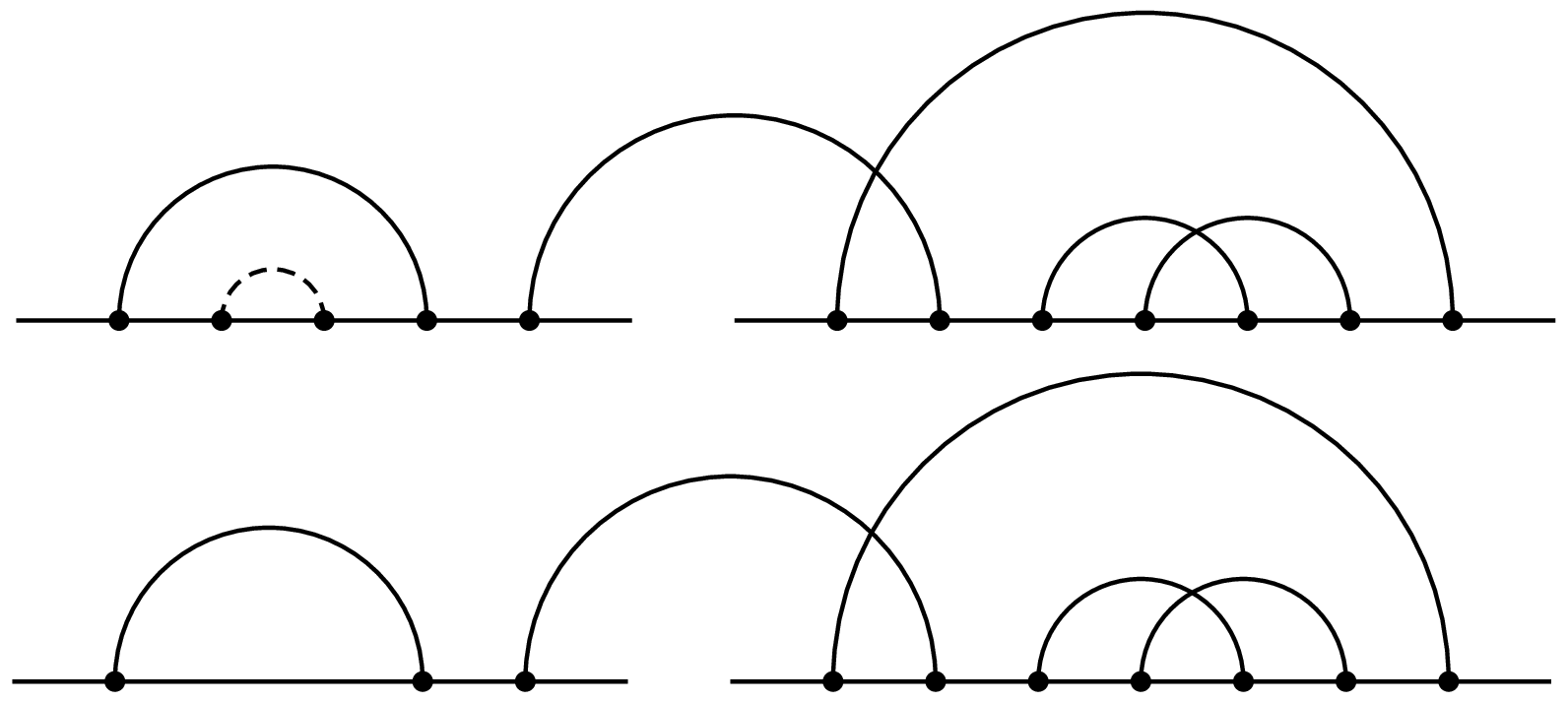}
 \caption{The first backbone has length $6$, and the second one has length $8$. 
Erasing the dashed chord joins two boundary components. }
 \label{cd_a_dual}%
\end{center}
\end{figure}

\begin{figure}[hbt]%
 \begin{center}
 \includegraphics[width=8cm]{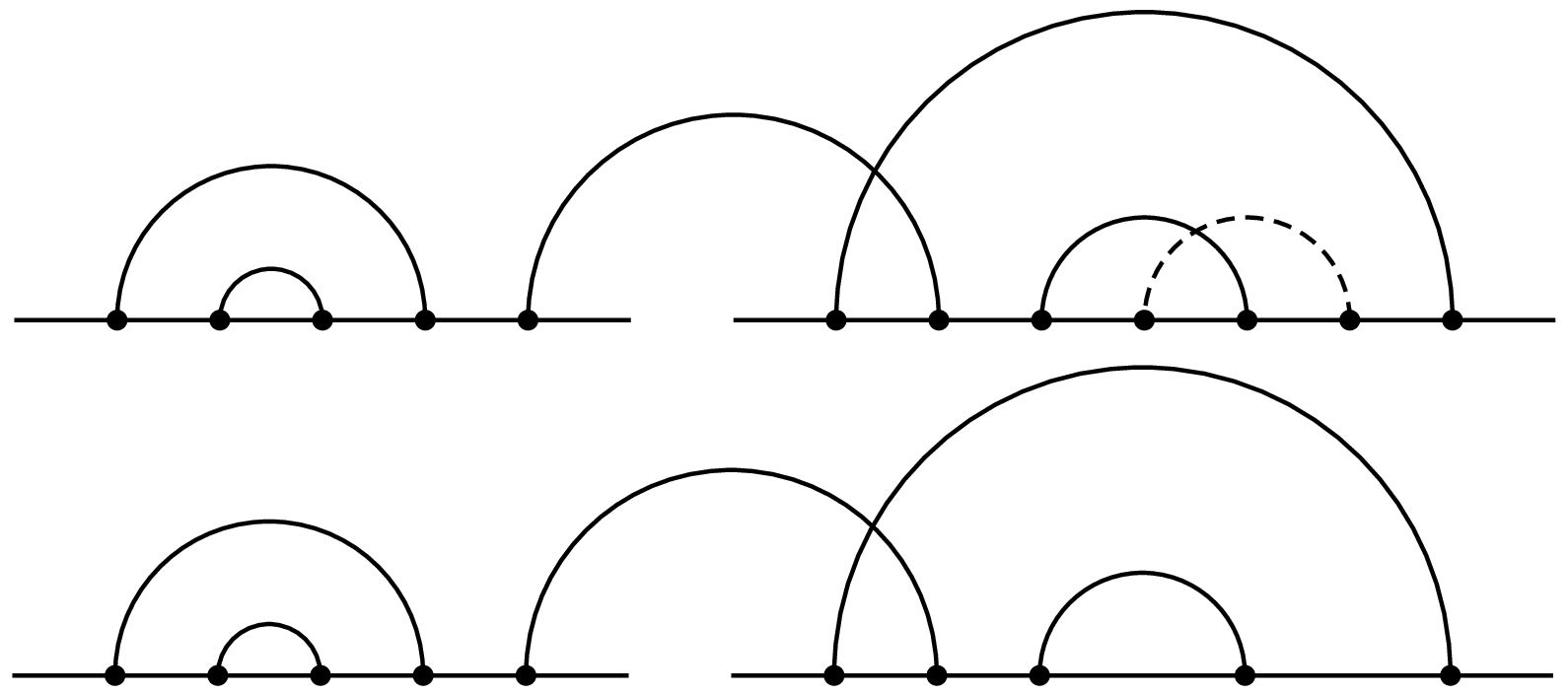}
 \caption{Erasing the dashed chord splits a boundary component into two ones.}
 \label{cd_b_dual}%
\end{center}
\end{figure}

\begin{figure}[hbt]%
 \begin{center}
 \includegraphics[width=8cm]{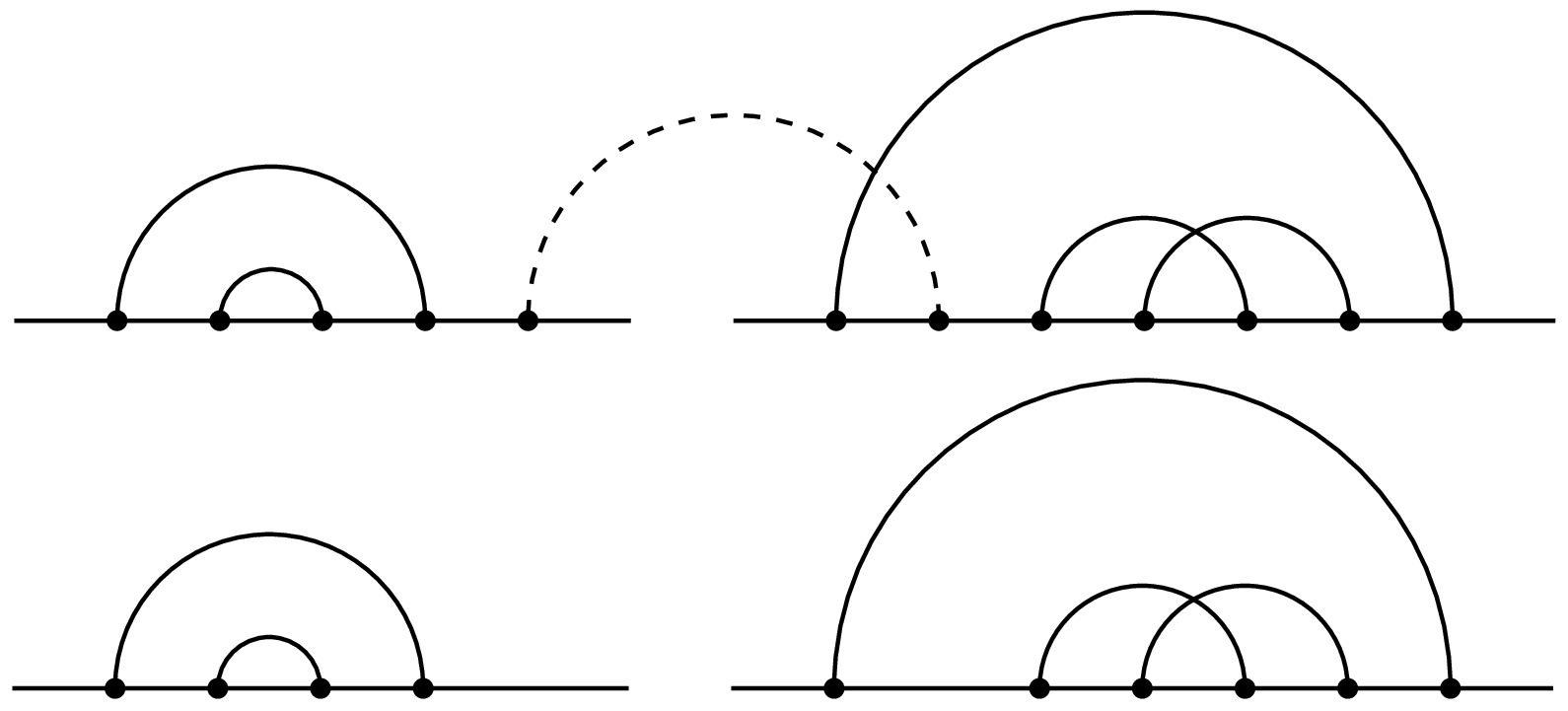}
 \caption{Erasing the dashed chord splits the diagram into two connected 
components.}
 \label{cd_c_dual}%
\end{center}
\end{figure}

\begin{proof}
The proof is similar to that of Proposition \ref{rec_for}. In this case, though, 
we erase a chord together with its endpoints. There are  three possibilities.
The first is that the chord belongs to two distinct boundary components (see 
Fig. \ref{cd_a_dual}). Upon erasing the  chord, these two components join into 
one. This possibility is described by the first term on the r.h.s. of 
(\ref{big_sum_dual}).

The second possibility occurs when the chord belongs to only one boundary 
component. When we erase this chord, the boundary component splits into two  
(see Fig. \ref{cd_b_dual}). In this case, the genus of the diagram decreases by 
$1$, and this is described by the second term on the r.h.s. of 
(\ref{big_sum_dual}).

In these two cases, the diagram remains connected after erasing a chord. The 
third possibility occurs when  erasing the chord splits the diagram into two 
connected components (see Fig. \ref{cd_c_dual}). This yields the third term on 
the r.h.s. of (\ref{big_sum_dual}).

The extension of the proof for the boundary length spectrum in non-orientable 
case follows a logic similar to that for the boundary point spectrum in Section 
\ref{sec:nonor} completing the proof of Theorem \ref{thm2}.
\end{proof}

\section{Matrix integral}\label{secmi}

We show here that certain linear combinations of the numbers 
$\Nd_{g,k,l}(\bb,\nb)$ can be interpreted as integrals over the space of 
Hermitian matrices. Once again, we start with the one backbone  case. Let $P$ be 
a Hermitian $N \times N$ matrix, such that $P^2=P$ and $Tr P = p$. Consider the 
matrix integral 
\begin{align}
M_m(s,p,N)=\int_{\mathcal{H}_N} Tr(X+sP)^m\, d\mu(X),
\end{align}
where ${\mathcal{H}_N}$ is the space of Hermitian matrices and 
$$d\mu(X)=\frac{1}{vol(\mathcal{H}_N)}\exp(-\frac{1}{2} Tr X^2 )\, dX$$ 
is the normalized Gaussian unitary-invariant measure on it (this is a special 
case of a much more general matrix integral considered in \cite{MSh}).
\begin{prop}
We have
\begin{align}
&M_m(s,p,N) =\nonumber\\
&= \sum_{k=0}^{[m/2]} \sum_{g=0}^{[k/2]} \sum_{n_0=0}^{k+1-2g} \sum_{{\sum i n_i 
= m-2k}}\Nd_{g,k,m-2k}(\pmb{e}_{m},\nb)\,s^{m-2k}\, N^{n_0}\,p^{\sum_{i\geq 
1}n_i} \label{mat_int}
\end{align}
\end{prop}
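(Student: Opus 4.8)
The plan is to compute $M_m(s,p,N)$ by Gaussian (Wick) integration and to recognize each surviving Feynman diagram as a connected one-backbone chord diagram. First I would expand
$$Tr(X+sP)^m=\sum Tr(M_1M_2\cdots M_m),$$
the sum ranging over all $2^m$ assignments of $M_a\in\{X,sP\}$ to the $m$ cyclically arranged factors, and integrate term by term. Since $d\mu(X)$ is the centered Gaussian with covariance $\E X_{ij}X_{kl}=\delta_{il}\delta_{jk}$, a term survives only when the number of $X$-factors is even, say $2k$; then $\E\,Tr(M_1\cdots M_m)$ equals the sum over the $(2k-1)!!$ Wick pairings of the $X$-factors, each contributing the product of the corresponding Kronecker deltas, while the $l:=m-2k$ factors equal to $sP$ contribute the scalar $s^{m-2k}$ and the matrix entries of $P$.

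Next I would interpret each (assignment, pairing) as a ribbon graph on a single $m$-gon whose sides carry $M_1,\dots,M_m$ in cyclic order, the distinguished side $1$ recording the start of the backbone. A Wick pair glues two sides, so the $2k$ paired sides become the $k$ chords and the $l$ sides carrying $sP$ become the marked points; this is exactly a connected one-backbone partial chord diagram of type $\{g,k,l;\eb_m;\nb\}$, connected because it lives on a single backbone and counted with no symmetry factor because the $m$ positions are labeled. Conversely each such diagram comes from a unique (assignment, pairing), so for fixed $(g,k,l,\nb)$ the number of diagrams is $\Nd_{g,k,l}(\eb_m,\nb)$, each weighted by $s^{m-2k}$.

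The crux, and the place where $P^2=P$ is essential, is the index sum attached to a fixed diagram. After imposing the Wick deltas the free indices organize into the boundary cycles of the ribbon graph, the chord edges acting as identity propagators; following a boundary cycle one multiplies the copies of $P$ sitting on the marked sides it meets, so a cycle through $i$ marked points contributes $Tr(P^i)$. Because $P$ is an idempotent Hermitian matrix with $Tr\,P=p$, one has $P^i=P$ and hence $Tr(P^i)=p$ for all $i\ge 1$, whereas a cycle through no marked point contributes $Tr(P^0)=Tr\,I=N$; a diagram of boundary point spectrum $\nb=(n_0,n_1,\dots)$ therefore carries the index weight $N^{n_0}p^{\sum_{i\ge1}n_i}$. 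The main obstacle is the careful bookkeeping here: verifying that the marked-side factors along a boundary cycle really compose into one trace $Tr(P^i)$ (ribbon orientations cause no trouble since $P$ is Hermitian, so $Tr((P^{T})^i)=Tr(P^i)$), and then matching the resulting summation ranges to those stated. The latter follow from Euler's formula $1-k+n=2-2g$ for a one-backbone diagram, giving $n=\sum_i n_i=k+1-2g$ with $0\le g\le[k/2]$ and $0\le n_0\le k+1-2g$, together with $\sum_{i\ge1}i\,n_i=l=m-2k$ and $0\le k\le[m/2]$; assembling the weights $s^{m-2k}N^{n_0}p^{\sum_{i\ge1}n_i}$ over all diagrams then yields the claimed formula.
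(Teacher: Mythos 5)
Your proof is correct and follows essentially the same route as the paper: Wick expansion of the Gaussian integral, identification of each surviving pairing with a one-backbone partial chord diagram ($X$-pairs become chords, $sP$-factors become marked points), and the observation that each boundary cycle contributes $N$ or $p$ according to whether it carries marked points. The only cosmetic difference is that the paper first diagonalizes $P$ so that the single index labelling a boundary component is constrained to lie in $\{1,\dots,p\}$ whenever that component meets a marked point, whereas you keep $P$ abstract and invoke $Tr(P^i)=Tr(P)=p$ for $i\geq 1$; the two bookkeepings are equivalent.
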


\begin{proof}
We prove (\ref{mat_int}) using the Wick formula.
 
First, note that one can diagonalize the matrix $P$, and this does not change 
the measure $d\mu(X)$. Therefore, one can assume that
$$p_{ij} = \begin{cases}
1, \text{ if } i=j, \quad i \leq p,\\
0, \text{ otherwise.} 
\end{cases}$$ 

Second, note that $M_m(s,p,N)$ is a polynomial in $s$, and the coefficient of 
$s^{m-2k}$ is $\sum_{\alpha,\beta} \int_{\HN} Tr(\Pi_{\alpha,\beta})\,d\mu(X)$, 
where the sum is taken over all products 
$\Pi_{\alpha,\beta}=X^{\alpha_1}P^{\beta_1}\cdots X^{\alpha_m}P^{\beta_m}$ with 
$\alpha_i,\beta_i\in\mathbb{Z}_{\geq 0}$ non-negative integers such that 
$\sum\alpha_i=2k$ and $\sum\beta_i=m-2k$.  We have
$$\int_{\HN} Tr(\Pi_{\alpha,\beta})\,d\mu(X)=\sum_{i_1=1}^N \dots \sum_{i_m=1}^N 
\int_{\HN} y_{i_1i_2}y_{i_2i_3}\dots y_{i_m i_1}\,d\mu(X),$$
where 
$$y_{i_ji_{j+1}} = \begin{cases}
x_{i_ji_{j+1}}, \text{ if $X$ is the $j$-th factor in the product } 
\Pi_{\alpha,\beta},\\
p_{i_ji_{j+1}}, \text{ if $P$ is the $j$-th factor in the product } 
\Pi_{\alpha,\beta}.
\end{cases}$$ 
To compute the expectation of the product $y_{i_1i_2}y_{i_2i_3}\dots y_{i_m 
i_1}$, one has to count all possible matchings between indices of the 
$X$-factors.  Any product with such a matching can be graphically represented by 
a chord diagram with $k$ chords and $m-2k$ marked points on the backbone, where 
the chords correspond to the matched $X$-factors, and the marked points 
correspond to $P$-factors. Each boundary component of the chord diagram is 
therefore labeled by some index $i_j$. If there are no marked points on the 
boundary component, then the corresponding index $i_j$ can take any value from 
$1$ to $N$. If there are marked points on the boundary component, then the 
corresponding index $i_j$ can only take values from 1 to $p$, because $p_{ii}$ 
is nonzero only when $i\leq p$. Thus, we have
\begin{align*}
\sum_{\alpha,\beta} \int_{\HN} Tr(\Pi_{\alpha,\beta})\,d\mu(X)=
\sum_{g=0}^{[k/2]} \sum_{{\sum i n_i = m-2k}}  \Nd_{g,k,m-2k}(\pmb{e}_{m},\nb) 
N^{n_0}p^{\sum_{i\geq 1}n_i}\;
\end{align*}
which completes the proof.
\end{proof}

Let us take the one backbone generating function 
$F_1(x,y;s_0,s_1,\dots;t_1,t_2,\dots)$  given by (\ref{gf}) and put $x=1$, 
$y=1/z^2$, $s_0=N$, $s_i=ps^i/z^i,\; i=1,2,\dots$, $t_j=1,\;j=1,2,\dots$. This 
gives us the expectation of the resolvent of $X+sP$:
\begin{align}
F_1&\left(1,\frac{1}{z^2},N;\frac{ps}{z},\frac{ps^2}{z^2},\dots;1,1,
\dots\right)=\nonumber\\
=&\sum_{m=0}^\infty \sum_{k=0}^{[m/2]} \sum_{g=0}^{[k/2]} \sum_{n_0=0}^{k+1-2g} 
\sum_{{\sum i n_i = m-2k}} 
\Nd_{g,k,m-2k}(\pmb{e}_{m},\nb)\,z^{-m}\,N^{n_0}\,p^{\sum_{i\geq 
1}n_i}\nonumber\\
=&-z\int_{\mathcal{H}_N} Tr(X+sP-zI)^{-1} \,d\mu(X)\;.\label{res}
\end{align}

\smallskip\noindent
{\bf Non-orientable case.} The numbers $\widetilde\Nd_{h,k,l}(\bb,\nb)$ appear 
as coefficients in the expansion of a matrix integral similarly to  the numbers 
$\Nd_{g,k,l}(\bb,\nb)$
again by Wick's Theorem. Namely, consider the matrix integral
\begin{align}
K_m(s,p,N)=\int_{\mathcal{H}_N(\mathbb{R})} Tr(X+sP)^m\, d\nu(X),
\end{align}
where ${\mathcal{H}_N}(\mathbb{R})$ is the space of real symmetric matrices and 
$$d\nu(X)=\frac{1}{vol(\mathcal{H}_N(\mathbb{R}))}\exp(-\frac{1}{2} Tr X^2 )\, 
dX$$ 
is the normalized Gaussian orthogonal-invariant measure on it.
\begin{prop}
We have
\begin{align}
&K_m(s,p,N) =\nonumber\\
&= \sum_{k=0}^{[m/2]} \sum_{h=0}^{[k/2]} \sum_{n_0=0}^{k+1-h} \sum_{{\sum i n_i 
= m-2k}}   
\widetilde\Nd_{h,k,m-2k}(\pmb{e}_{m},\nb)\,s^{m-2k}\,N^{n_0}\,p^{\sum_{i\geq 
1}n_i} \label{mat_int_non}
\end{align}
\end{prop}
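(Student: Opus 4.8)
The plan is to mirror the orientable computation exactly, the only genuinely new ingredient being the correct bookkeeping for Wick contractions in the \emph{real symmetric} Gaussian ensemble. First I would diagonalize $P$ as in the orientable proof, so that $P=\mathrm{diag}(1,\dots,1,0,\dots,0)$ with $p$ ones, and expand $K_m(s,p,N)$ as a polynomial in $s$; the coefficient of $s^{m-2k}$ is again a sum over all words $\Pi_{\alpha,\beta}=X^{\alpha_1}P^{\beta_1}\cdots X^{\alpha_m}P^{\beta_m}$ with $\sum\alpha_i=2k$ and $\sum\beta_i=m-2k$, of $\int_{\mathcal{H}_N(\mathbb{R})}Tr(\Pi_{\alpha,\beta})\,d\nu(X)$. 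As before I would write this as $\sum_{i_1,\dots,i_m}\int y_{i_1i_2}\cdots y_{i_mi_1}\,d\nu(X)$, apply Wick's formula to pair up the $2k$ entries coming from $X$-factors, and read off a chord diagram with $k$ chords and $m-2k$ marked points in which chords correspond to matched $X$-factors and marked points to $P$-factors.

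The crucial difference to isolate is the covariance structure. For the \emph{complex} Hermitian ensemble one has $\E\,x_{ij}x_{kl}=\delta_{il}\delta_{jk}$, so a Wick pairing forces the two contracted edges to be traversed in \emph{opposite} directions, which is exactly why only orientable (untwisted) gluings arise and the genus is $g$. For the real symmetric ensemble, however, $x_{ij}=x_{ji}$ and the covariance is $\E\,x_{ij}x_{kl}=\delta_{ik}\delta_{jl}+\delta_{il}\delta_{jk}$; the extra term $\delta_{ik}\delta_{jl}$ permits a contraction in which the two edges are identified in the \emph{same} direction, i.e.\ a twisted (orientation-reversing) gluing. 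Thus each of the $k$ Wick pairings independently contributes either an untwisted or a twisted identification, and summing over all $2^k$ choices precisely reproduces the full count of orientable \emph{and} non-orientable diagrams tracked by $\widetilde{\Nd}_{h,k,m-2k}$, with $h$ now denoting twice the genus in the orientable case and the number of crosscaps in the non-orientable case, exactly as defined in the introduction. This is the step I expect to be the main obstacle: one must check that the index-tracing argument correctly assigns each pattern of twists to the right value of $h$ via the Euler-characteristic relation, and that the count of free indices per boundary component is unchanged by twisting.

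Granting that, the rest is identical to the orientable argument. Each boundary component of the resulting (possibly non-orientable) diagram carries a single summation index $i_j$. If the component has no marked points, that index ranges freely over $1,\dots,N$, contributing a factor $N$; if it carries at least one marked point it is constrained to $1,\dots,p$ by the support of the diagonal projector $P$, contributing a factor $p$. Hence a diagram with boundary point spectrum $\nb$ contributes $N^{n_0}p^{\sum_{i\geq1}n_i}$, and collecting terms gives
\begin{align*}
\sum_{\alpha,\beta}\int_{\mathcal{H}_N(\mathbb{R})}Tr(\Pi_{\alpha,\beta})\,d\nu(X)
=\sum_{h=0}^{[k/2]}\sum_{\sum i n_i=m-2k}\widetilde\Nd_{h,k,m-2k}(\pmb{e}_m,\nb)\,N^{n_0}p^{\sum_{i\geq1}n_i}.
\end{align*}
Finally I would note the range of $h$: since the glued surface has $n=\sum_i n_i$ boundary components and Euler characteristic $2-h-n$ with $1-k+n=2-h$, the constraint $n_0\le k+1-h$ follows just as in the orientable case (where $2g$ is replaced by $h$). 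Multiplying by $s^{m-2k}$ and summing over $k$ from $0$ to $[m/2]$ then yields formula (\ref{mat_int_non}), completing the proof.
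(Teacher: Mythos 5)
Your proposal is correct and is exactly the argument the paper intends: the paper in fact prints no proof of this proposition, saying only that the coefficients arise ``similarly \dots again by Wick's Theorem,'' and the point you isolate --- that the real symmetric covariance $\E x_{ij}x_{kl}=\delta_{ik}\delta_{jl}+\delta_{il}\delta_{jk}$ contributes both an untwisted and a twisted identification for each Wick pair, matching the two-coloured-chord formalization of $\widetilde{\Nd}_{h,k,l}$ given in the introduction, with one free index (ranging over $N$ or $p$ values) per boundary component exactly as in the orientable case --- is precisely the missing ingredient. The only caveat is one of normalization rather than of logic: with the Gaussian weight $\exp(-\tfrac12\, Tr\, X^2)$ as literally written for $d\nu$, each contraction on the real symmetric ensemble carries an extra factor $\tfrac12$, so the covariance you quote implicitly assumes the normalization $\exp(-\tfrac14\, Tr\, X^2)$ (the one under which the stated identity holds without a $2^{-k}$ prefactor); this discrepancy lies in the paper's conventions, not in your argument.
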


\smallskip\noindent
{\bf Multibackbone case.} In the multibackbone case, the matrix integral has a 
similar form. Take a sequence $\bb=(b_1,b_2,\dots)$ with a finite number of 
non-zero elements that are positive integers. 
 Consider the matrix integral
 $$I_N(s,p,\bb)=\int_{\mathcal{H}_N} \prod_{m=0}^\infty (Tr(X+sP)^m)^{b_m} \, 
d\mu(X).$$
 This integral is related to the total generating function $F$ by the formula:
 \begin{align}
\frac{1}{b!}\,I_N(s,p,\bb)=\sum_{k=0}^{\infty}\;\sum_{{\sum i n_i = \sum i 
b_i-2k}}\widehat{\Nd}_{k,g,m-2k}(\bb,\nb)N^{n_0}\,s^{\sum i 
b_i-2k}\,p^{\sum_{i\geq 1}n_i},
 \end{align}
where $\widehat{\Nd}_{k,g,m-2k}(\bb,\nb)$ is the coefficient of $x^{2g-2} y^k 
s_0^{n_0}s_1^{n_1}\dots t_1^{b_1}t_2^{b_2}\dots$ in the power series expansion 
of $e^F, \;b=\sum_{i\geq 1} b_i$.

In \cite{Kaz96}, there is a matrix integral interpretation for the numbers 
$\Nd_{0,k,b}(\boldsymbol{p})$.
\linebreak
Namely, let
$${\mathcal F}({\pmb{ s}},t) = \sum_{\boldsymbol{p}}\sum_{b=1}^\infty 
\Nd_{0,k,b}(\boldsymbol{p}) {\pmb{ s}}^{\boldsymbol{p}} {t}^{b},$$
then one has 
$$\log \int_{\mathcal{H}(N)} \exp \left(-\frac{N}{2} Tr X^2 + \sum_{k=1}^\infty 
\frac{t^k}{k} Tr (XA)^k\right) dX \to {\mathcal F}({\pmb{ s}},t), $$
where $A$ is some matrix such that $s_i = \frac{1}{iN}\, Tr A^i$.

For the large $N$ limit in the $1$-backbone case,  this matrix integral can be 
modified so that the limit distribution is computable by free probability 
methods. Namely, consider semi-positive definite matrix $AXA^* A X^* A^*$. For 
any integer $k>0$, we have
\begin{align}
\lim_{N \to \infty}\frac{1}{N}\int Tr  \left(AXA^*(AXA^*)^* \right)^k 
\exp\left(-\frac{N}{2}TrXX^*\right) dX = 
\sum_{\boldsymbol{p}}\Nd_{0,k,b}(\boldsymbol{p}) {\pmb{ s}}^{\boldsymbol{p}},  
\label{mm}
\end{align}
where $s_i = \frac{1}{N} Tr(AA^*)^i$.

\smallskip\noindent
{\bf Asymptotic spectral distribution.} In the one backbone case, we can compute 
the leading term  in the asymptotics of the matrix integral. To treat the large 
$N$ limit of (\ref{res}), one can use the techniques of free probability. Put 
$p=[qN]$ with some $q \in (0,1)$ and consider the limit 
\begin{align}
\widetilde{M}_m(s,q)=\lim_{N \to \infty}{N^{-m/2-1} M_m(s\sqrt{N},p,N)}. 
\end{align}
This limit is a polynomial in $q$ and $s$, and the coefficient at $s^l q^n$ is 
the number of chord diagrams with $\frac{m-l}{2}$ chords, $l$ marked points and 
$n$ boundary components containing at least one marked point (i.e., 
$n=\sum_{i\geq 1} n_i$). Note that $\widetilde{M}_m(s,q)$ are the moments of a 
probability measure on $\mathbb{R}$, namely, the limit spectral distribution of 
the matrices $X/\sqrt{N}+s P$. This measure is uniquely determined by the limit 
spectral measures of $X/\sqrt{N}$ and $sP$. 

Let us define the $\mathcal{R}$-transform $\mathcal{R}_\mu (z)$ and the 
$\mathcal{S}$-transform $\mathcal{S}_\mu (z)$ of a measure $\mu$.
We start with the moment generating function $\mathcal{M}_{\mu}(z)$ and the 
Cauchy transform $G_{\mu}(z)$ which are defined by the series
\begin{align}
G_{\mu}(z)=\sum_{m=0}^\infty \frac{M_m}{z^{m+1}},\label{ca}
\end{align}
and 
\begin{align}
\mathcal{M}_{\mu}(z)=\sum_{m=1}^\infty {M_m}{z^{m}},\label{mgf}
\end{align}
where $M_m=\int_{\mathbb{R}}x^m d\mu(x)$ are the moments of the measure $\mu$.
The (unique) solution of the equation
\begin{equation}
\mathcal{R}_\mu (G_\mu(z))+\frac{1}{G_\mu(z)}=z.
\label{rtr}
\end{equation}
is  $\mathcal{R}_\mu (z)$.
The $\mathcal{S}$-transform is defined by
\begin{equation}
\mathcal{S}_\mu(z)= \frac{z+1}{z}\mathcal{M}^{-1}_\mu(z).
\label{str}
\end{equation}

The following is standard \cite{Speicher}:
\begin{prop}\label{fp}
(1) If $A_N$ and $B_N$ are two random  Hermitian $N\times N$ matrices in general 
position, and the limit spectral distributions of $A_N$ and $B_N$ are $\mu$ and 
$\nu$ respectively, then the limit spectral distribution of $A_N+B_N$ is some 
distribution  $\mu \boxplus \nu$, which is determined by its 
$\mathcal{R}$-transform 
$$ \mathcal{R}_{\mu \boxplus \nu} (z) = \mathcal{R}_{\mu} (z)+\mathcal{R}_{\nu} 
(z).$$

(2) If $A_N$ and $B_N$ are two random  $N\times N$ matrices in general position, 
and the limit spectral distributions of $A_N A_N^*$ and $B_N B_N^*$ are $\mu$ 
and $\nu$ respectively, then the limit spectral distribution of 
$A_NB_N(A_NB_N)^*$ is some distribution  $\mu \boxtimes \nu$, which is 
determined by its $\mathcal{S}$-transform 
$$ \mathcal{S}_{\mu \boxtimes \nu} (z) = \mathcal{S}_{\mu} (z)\mathcal{S}_{\nu} 
(z).$$
\end{prop}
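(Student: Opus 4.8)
The plan is to deduce both statements from Voiculescu's theory of freeness, treating the matrix limits as elements of a non-commutative probability space. First I would fix a tracial $W^\ast$-probability space $(\mathcal{A},\tau)$ in which the normalized traces $\tfrac1N\operatorname{Tr}$ of polynomials in the ensembles converge to $\tau$ evaluated on limiting self-adjoint elements, so that the limit spectral distribution of such a matrix is exactly the $\tau$-distribution of its limit. The hypothesis that the two ensembles are ``in general position'' is precisely the condition (Voiculescu's asymptotic freeness theorem, e.g.\ for a deterministic family conjugated by an independent Haar unitary) that makes the two limiting subalgebras \emph{free}. It then suffices to prove the two transform identities for free elements, which I would do through the combinatorics of non-crossing partitions.

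For part (1) I would introduce the free cumulants $\kappa_n$, defined implicitly from the moments $M_n$ by the moment–cumulant formula summing $\prod_V \kappa_{|V|}$ over all non-crossing partitions of $\{1,\dots,n\}$. The structural fact is that freeness is equivalent to the vanishing of all \emph{mixed} cumulants, from which one reads off $\kappa_n(a+b)=\kappa_n(a)+\kappa_n(b)$ for every $n$. Packaging the cumulants into a formal generating series identifies it with $\mathcal{R}_\mu$ exactly through the functional equation (\ref{rtr}), which encodes the relation between the cumulant series and the Cauchy transform $G_\mu$; additivity of the cumulants then yields $\mathcal{R}_{\mu\boxplus\nu}=\mathcal{R}_\mu+\mathcal{R}_\nu$ immediately.

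For part (2) I would first reduce the random-matrix product to a product of two free \emph{positive} elements by cyclicity of the trace:
\begin{align*}
\operatorname{Tr}\bigl((A_NB_N(A_NB_N)^\ast)^k\bigr)=\operatorname{Tr}\bigl((A_N^\ast A_N\,B_NB_N^\ast)^k\bigr).
\end{align*}
Since $A_N^\ast A_N$ and $A_NA_N^\ast$ share the same limiting distribution $\mu$ (and likewise $B_N^\ast B_N,\,B_NB_N^\ast$ share $\nu$), the moments of the target matrix coincide with those of a product of two free positive elements with distributions $\mu$ and $\nu$, whose law is by definition $\mu\boxtimes\nu$. It then remains to prove multiplicativity of the $\mathcal{S}$-transform. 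Here I would work with the moment series $\mathcal{M}_\mu$ of (\ref{mgf}), its compositional inverse, and the definition (\ref{str}); the identity $\mathcal{S}_{\mu\boxtimes\nu}=\mathcal{S}_\mu\,\mathcal{S}_\nu$ follows from the description of the free cumulants of a product in terms of the individual cumulants via the lattice of non-crossing partitions, equivalently from Voiculescu's multiplicative calculus or Haagerup's reduction to the $\mathcal{R}$-transform.

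The main obstacle is this last multiplicativity statement. Part (1) is essentially bookkeeping once asymptotic freeness and the moment–cumulant formalism are in place, but the $\mathcal{S}$-transform identity requires the genuinely non-trivial combinatorial input that the non-crossing cumulants of a product of free variables factor correctly; this is where the non-crossing-partition lattice and Kreweras complementation do the real work, and it is the step I would expect to occupy most of the argument. Accordingly I would invoke the standard treatment \cite{Speicher} for that computation rather than reproduce it in full.
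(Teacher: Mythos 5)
Your outline is correct and matches the paper's treatment: the paper offers no proof of this proposition at all, simply labeling it ``standard'' and citing \cite{Speicher}, and your sketch (asymptotic freeness from ``general position,'' additivity of free cumulants for the $\mathcal{R}$-transform, and the non-crossing-partition/Kreweras argument for multiplicativity of the $\mathcal{S}$-transform) is precisely the standard argument from that reference, to which you also ultimately defer for the hardest step.
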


Thus, if one knows the $\mathcal{R}$-transform of the spectral distribution of 
$X/\sqrt{N}$ (let it be $\mu$) and of $sP$ (let it be $\nu$), then one also 
knows the $\mathcal{R}$-transform of the spectral distribution of 
$X/\sqrt{N}+sP$. Computing the Cauchy transform of the latter and expanding it 
in the inverse powers of $z$, one gets the coefficients $\widetilde{M}_m(s,q)$ 
in accordance with (\ref{ca}).
Note that the measure $\mu$ appears in the famous Wigner semicircle law, i.e.,
$$d \mu(x) =\begin{cases}
\frac{1}{2\pi}\sqrt{4-x^2}, \text{ if } -2\leq x\leq2\\
0, \text{ otherwise,} 
\end{cases}$$
and the measure $\nu$ is a  two-point distribution 
$q\delta(x-1)+(1-q)\delta(x)$.
Now we compute the Cauchy transforms:
\begin{align*} 
&G_{\mu}(z)=\frac{z-\sqrt{z^2-4}}{2},\\
&G_{\nu}(z)=\frac{q}{z-s}+\frac{1-q}{z}=\frac{z-s+qs}{z(z-s)}
\end{align*}
and explicitly solve the equation (\ref{rtr}):
\begin{align*} 
&\mathcal{R}_{\mu} (z)=z,\\
&\mathcal{R}_{\nu} (z) = \frac{zs-1+\sqrt{(zs-1)^2+4zsq}}{2z}. \end{align*}
Thus, the Cauchy transform $G(z)$ of the limit spectral measure of the matrix 
$X/\sqrt{N}+sP$ satisfies the equation
$$
G(z) + s +\frac{\sqrt{(sG(z)-1)^2-4sqG(z)}}{2G(z)}=z.
$$
This allows us to compute the first several polynomials $\widetilde{M}_m(s,q)$, 
and we find that they coincide with previous computations: 
$\widetilde{M}_m(s,q)$ is the coefficient of $y^m$ found in Remark 
\ref{rem_coeff} by purely combinatorial methods.

Let us use the $\mathcal{S}$-transform technique to compute the limit spectral 
distribution of a random matrix $AXA^*AX^*A^*$, where $X$ is a standard complex 
Gaussian $N\times N$ matrix with variance $N^{-1/2}$, and $A$ is an $N\times N$ 
matrix such that $\lim_{N\to \infty} \frac{1}{N} Tr (AA^*)^{k}=s_k$. In other 
words, $s_k$ are the moments of the limit spectral distribution of $AA^*$  that 
we denote by $\nu$. Its $\mathcal{S}$-transform is given by
$$
\mathcal{S}_{\nu}(z)=\frac{z+1}{z}\,{\mathcal M}_{\nu}^{-1}(z)\;,
$$
where
$$
{\mathcal M}_{\nu}(z)=\sum_{k=1}^{\infty}s_kz^k 
$$
and ${\mathcal M}_{\nu}^{-1}$ is the inverse function to ${\mathcal M}_{\nu}$.
The limit spectral distribution of $XX^*$ (let it be $\mu$) is the 
Marchenko--Pastur distribution with parameter $1$ and has 
$\mathcal{S}$-transform of the form 
$$\mathcal{S}_{\mu}(z)=\frac{1}{1+z}.$$
The limit spectral distribution $\lambda$ of $AXA^*AX^*A^*$ therefore has 
$\mathcal{S}$-transform 
$$\mathcal{S}_{\lambda}(z)=\frac{1}{1+z}\,\mathcal{S}^2_{\nu}(z).$$

The previous equation allows to compute length spectra for planar diagrams on 
one backbone, namely:

\begin{thm} Put ${\mathcal K}(z)=\frac{z}{1+z}\, {\mathcal S}_{\lambda}(z)$. 
Then the one backbone generating function $G_1(x,z;{\pmb{s}})$ for boundary 
length spectra in genus zero is given by
$$G_1(0,z;{\pmb s})=1+ {\mathcal K}^{-1}(z).$$  In particular, we have 
$G_1(0,z;1,1\ldots )=C_{0,1}(z)$, the Catalan
generating function.
\end{thm}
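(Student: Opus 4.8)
The plan is to extract the moments of $\lambda$ from the matrix model (\ref{mm}) and then unwind the definition of the $\mathcal{S}$-transform so that $\mathcal{K}$ becomes precisely the compositional inverse of the moment generating function of $\lambda$. First I would invoke (\ref{mm}): for each $k\geq 1$ the $k$-th moment $M_k=\int x^k\,d\lambda(x)$ of the limiting spectral distribution $\lambda$ of $AXA^*AX^*A^*$ equals $\sum_{\boldsymbol p}\Nd_{0,k,1}(\boldsymbol p)\,\pmb s^{\boldsymbol p}$, where $s_i=\lim_{N\to\infty}\frac1N\,Tr(AA^*)^i$. The prefactor $\frac1N$ in front of the trace isolates the planar contribution, so the right-hand side is exactly the genus-zero, one-backbone length-spectrum count, with each boundary cycle of length $i$ contributing one factor $s_i$. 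Hence the moment generating function $\mathcal{M}_\lambda(z)=\sum_{k\geq1}M_k z^k$ of (\ref{mgf}) agrees, coefficient by coefficient, with $G_1(0,z;\pmb s)$ after removing the $k=0$ term; recording the trivial ($k=0$) diagram as $M_0=1$ then gives $G_1(0,z;\pmb s)=\sum_{k\geq0}M_k z^k=1+\mathcal{M}_\lambda(z)$.

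Next I would compute $\mathcal{K}$ directly from the definition of the $\mathcal{S}$-transform. By (\ref{str}), $\mathcal{S}_\lambda(z)=\frac{z+1}{z}\,\mathcal{M}_\lambda^{-1}(z)$, where $\mathcal{M}_\lambda^{-1}$ denotes the compositional inverse. Therefore
$$\mathcal{K}(z)=\frac{z}{1+z}\,\mathcal{S}_\lambda(z)=\frac{z}{1+z}\cdot\frac{z+1}{z}\,\mathcal{M}_\lambda^{-1}(z)=\mathcal{M}_\lambda^{-1}(z),$$
so the normalizing factor $\frac{z}{1+z}$ is engineered precisely to cancel the $\frac{z+1}{z}$ coming from the $\mathcal{S}$-transform and to return the bare inverse moment series. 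Taking compositional inverses gives $\mathcal{K}^{-1}(z)=\mathcal{M}_\lambda(z)$, which together with the first paragraph yields $G_1(0,z;\pmb s)=1+\mathcal{K}^{-1}(z)$, as claimed.

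For the specialization $\pmb s=(1,1,\dots)$, the equalities $s_i=1$ for all $i$ force the limit law $\nu$ of $AA^*$ to be $\delta_1$, i.e.\ $AA^*\to I$ spectrally; then $AXA^*AX^*A^*$ is asymptotically unitarily conjugate to $XX^*$, so $\lambda$ is the Marchenko--Pastur distribution with parameter $1$, whose moments are the Catalan numbers $C_k$. Consequently $1+\mathcal{K}^{-1}(z)=1+\sum_{k\geq1}C_k z^k=\sum_{k\geq0}C_k z^k=C_0(z)=C_{0,1}(z)$. As an internal check one verifies $\mathcal{S}_\nu\equiv1$ here, so $\mathcal{S}_\lambda(z)=\frac1{1+z}$ and $\mathcal{K}(z)=\frac{z}{(1+z)^2}$, whose inverse is indeed $C_0(z)-1=\mathcal{M}_\lambda(z)$ because $C_0$ satisfies $C_0=1+zC_0^2$.

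The main obstacle is not the algebra of the second paragraph, which is immediate, but the bookkeeping of the first: one must confirm that the Wick/$\,^\prime$t Hooft expansion behind (\ref{mm}) reproduces exactly the genus-zero numbers $\Nd_{0,k,1}(\boldsymbol p)$ with the correct $\pmb s^{\boldsymbol p}$ weighting, and one must treat the $k=0$ normalization with care. The subtlety is that the $z^0$-coefficient of $G_1$ dictated by the initial condition of Theorem \ref{thm2} is $s_1$, whereas the constant term on the right is $M_0=1$; these match under the natural spectral normalization $s_1=\int x\,d\nu(x)=1$ (which holds in the Catalan case and, more generally, is the convention making the trivial one-backbone diagram count as $1$). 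Making this identification precise, together with verifying that the $\frac1N$-scaling selects genus zero, constitutes the only genuinely nontrivial part of the argument.
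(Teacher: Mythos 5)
Your argument is correct and follows essentially the same route as the paper, which simply asserts that the theorem follows from (\ref{mm}), (\ref{str}) and Proposition \ref{fp}(2) and then verifies the Catalan specialization via $\mathcal{S}_\nu\equiv 1$, $\mathcal{S}_\lambda(z)=1/(1+z)$, $\mathcal{K}(z)=z/(1+z)^2$; your unwinding of $\mathcal{K}=\mathcal{M}_\lambda^{-1}$ and the identification of the moments of $\lambda$ with the planar one-backbone length-spectrum counts is precisely the intended content of that one-line proof. Your remark about the constant term ($s_1$ versus $1$) correctly flags a normalization the paper glosses over, and it is immaterial in the Catalan specialization where $s_1=1$.
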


The proof of this theorem immediately follows from (\ref{mm}), (\ref{str}) and 
Proposition \ref{fp}, (2). To check that ${\mathcal K}^{-1}(z)$ generates the 
Catalan numbers, we notice that for $s_k=1$ for all $k$ we have ${\mathcal 
M}_{\nu}(z)=\frac{z}{1-z}$ and ${\mathcal M}_{\nu}^{-1}(z)=\frac{z}{1+z}$. 
Therefore, ${\mathcal S}_{\nu}(z)=1$ and ${\mathcal 
S}_{\lambda}(z)=\frac{1}{1+z}$. Thus we have ${\mathcal K}(z)=\frac{z}{(1+z)^2}$ 
and
$$
{\mathcal K}^{-1}(z)=\frac{1-2z-\sqrt{1-4z}}{2z}
$$
(since ${\mathcal K}^{-1}(0)=0)$, that is a well-known generating function  for 
the Catalan numbers.

\section{Closing Remarks}\label{closing}

See \cite{PKWA1,PKWA2} for an application of the non-orientable diagrams to 
modeling the topology of proteins.

Inspired by the results of this paper and with an eye to understanding the 
multibackbone analog  of the differential equation equivalent to the 
Harer-Zagier recursion 
$(n+1)C_{g,1,n}=(2n-1)\left( 2C_{g,1,n-1}+\binom{2n-2}{2} C_{g-1,1,n-2} 
\right)$, we ask if there is a differential operator in the variables $(x,y,t)$ 
that vanishes on $C(x,y,t)$ and thus determines it.  In fact, the Master Loop 
Equation of the model in \cite{ACPRS} provides a constraint on $C(x,y,t)$ that 
however fails to give a differential operator.

{\bf Acknowledgements.} We thank M.~Kazarian for suggesting a link between 
Theorems 1, 2 and the KP theory.

\end{document}